\documentclass[11pt]{amsart}
\usepackage[utf8]{inputenc}
\usepackage{mathtools}
    \mathtoolsset{centercolon}
\usepackage{amssymb}
\usepackage{enumitem}
\usepackage{thmtools}
\usepackage{hyperref}
\usepackage{scalerel}
\usepackage{tikz}
\usepackage{tikz-cd}

% Theorems
\theoremstyle{plain}
  \declaretheorem[within=section]{theorem}
  \declaretheorem[name={Theorem}]{introtheorem}
    
  \declaretheorem[name={Theorem}]{maintheorem}
    
  \declaretheorem[numbered=no,name={Theorem}]{theorem*}
  \declaretheorem[sibling=theorem]{proposition}
  \declaretheorem[sibling=theorem]{lemma}
  \declaretheorem[sibling=theorem]{corollary}
\theoremstyle{definition}
  \declaretheorem[sibling=theorem]{remark}
  \declaretheorem[sibling=theorem]{definition}
  \declaretheorem[sibling=theorem,name={Proposition-Definition}]{propdef}
  \declaretheorem[sibling=theorem,qed={$\diamondsuit$}]{example}

% newcommands
\newcommand{\C}{\mathbf{C}}
\newcommand{\K}{\mathbf{K}}
\newcommand{\Q}{\mathbf{Q}}
\newcommand{\R}{\mathbf{R}}
\newcommand{\Si}{\mathbf{S}}
\newcommand{\T}{\mathbf{T}}
\newcommand{\OTR}{\mathcal{O}_{\TR}}
\newcommand{\ORR}{\mathcal{O}_{\R[[t^\R]]}}
\newcommand{\TR}{ % Tropical Hyperfield
    {\mathbf{T}\!\!\mathbf{R}}
}
\DeclareMathOperator{\mult}{mult}
\DeclareMathOperator{\sign}{sign}
\DeclareMathOperator{\newt}{Newt}

\DeclareMathOperator*{\bigboxplus}{\raisebox{-0.5em}{\scaleobj{2}{\boxplus}}}
\DeclareMathOperator{\fr}{Frac}
\DeclareMathOperator{\In}{in}
\DeclareMathOperator{\supp}{supp}

% Bibliography
\usepackage[
    backend=biber,
    style=alphabetic,
    url=false,
    doi=true,
    eprint=true
]{biblatex}
\addbibresource{references.bib}
\usepackage{hyperref}
\hypersetup{colorlinks=true}

%% Title + Author
\title[Multiplicities over the Signed Tropical Hyperfield]{A Newton Polygon Rule for Formally-Real Valued Fields and Multiplicities over the Signed Tropical Hyperfield}
\author{Trevor Gunn}
\email{\href{mailto:tgunn@gatech.edu}{tgunn@gatech.edu}}
\address{School of Mathematics, Georgia Institute of Technology, Atlanta, USA}

\begin{document}
\begin{abstract}
By defining multiplicities for zeros of polynomials over hyperfields, Baker and Lorscheid were able to provide a unifying perspective on Descartes's rule and the Newton polygon rule for polynomials over a formally-real and valued field respectively. In this paper, we apply their multiplicity formula to the hyperfield associated with formally-real, valued fields to prove a Newton polygon rule which combines Descartes's rule of signs with the classical Newton polygon rule.

\bigskip

\noindent
MSC: 12D10 (Primary); 12D15, 12J25, 12K99, 14T05 (Secondary)

\bigskip

\noindent
Keywords: Hyperfields, valued fields, real-closed fields, multiplicities, Newton polygons, Descartes's rule
\end{abstract}
\maketitle

\section{Introduction}
Hyperfields are a generalization of fields whose addition is set-valued. The notion of multi-valued groups goes back to at least Frédéric Marty in the 1930's and hyperrings and hyperfields to Marc Krasner in the 1950's.
In the context of valued fields and tropical geometry, Oleg Viro described how tropical varieties arise as equations over hyperfields, replacing the standard ``minimum occurs twice'' formulation with an algebraic formulation \cite{V1, V2}. We direct the reader to Viro's paper \cite{V1} for more on the history of multi-valued algebras.

Recently, Matt Baker and Nathan Bowler have used hyperfields as a unifying framework to study matroids, oriented matroids and valuated matroids~\cite{BB}.
And then later, Matt Baker and Oliver Lorscheid have shown how Descartes's rule of signs and Newton's polygon rule can both be obtained from a single definition of multiplicity over hyperfields \cite{BL}.

Baker and Lorscheid (\emph{loc. cit.}) define multiplicities of roots of polynomials over a hyperfield $H$ via the recursive formula
\[ \mult_a(p) \coloneqq \mult^H_a(p) \coloneqq \begin{cases}
    0 & \text{if } p(a) \not\ni 0, \\
    1 + \max \{\mult^H_a(q) : p \in (x - a) q\} & \text{if } p(a) \ni 0.
\end{cases} \]
Baker and Lorscheid prove the following two theorems.

\begin{theorem*}[Baker, Lorscheid]
For the hyperfield of signs, $\mult^{\Si}_{+1}(p) = \Delta(p)$, where $\Delta(p)$ is the number of sign changes in the coefficients of $p$.
\end{theorem*}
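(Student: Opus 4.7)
I would proceed by induction on $\deg p$. The degree-zero base case is immediate. For the inductive step I first identify when $p(+1) \ni 0$. In $\Si$ the hyperfield sum of a multiset of signs contains $0$ precisely when both $+1$ and $-1$ occur, and $p(+1) = \bigboxplus_i a_i$ (the hyperfield sum of the nonzero coefficients of $p$). Hence $p(+1) \ni 0$ iff $\Delta(p) \geq 1$, so if $\Delta(p) = 0$ the recursive definition yields $\mult^{\Si}_{+1}(p) = 0 = \Delta(p)$.

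Now suppose $\Delta(p) \geq 1$. Then $\mult^{\Si}_{+1}(p) = 1 + \max\{\mult^{\Si}_{+1}(q) : p \in (x-1)q\}$, and by the induction hypothesis applied to each such $q$ (which has smaller degree) this equals $1 + \max\{\Delta(q) : p \in (x-1)q\}$. The theorem therefore reduces to
\[
    \max\{\Delta(q) : p \in (x-1)q\} = \Delta(p) - 1,
\]
which I establish by separate upper and lower bounds.

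\emph{Upper bound.} Let $q = \sum b_i x^i$ satisfy $p \in (x-1)q$. The coefficientwise translation is $a_n = b_{n-1}$, $a_0 = -b_0$, and $a_i \in b_{i-1} \boxplus (-b_i)$ for $0 < i < n$. Over $\Si$, the sum $b_{i-1} \boxplus (-b_i)$ is a singleton except when $b_{i-1}$ and $b_i$ are equal and nonzero; in particular, at every sign change of $q$ the value $a_i$ is forced to equal $b_{i-1}$. Reading off these forced coefficients of $p$ from top to bottom, together with the endpoint values $a_n = b_{n-1}$ and $a_0 = -b_0$, one sees that $p$ must exhibit at least $\Delta(q) + 1$ sign changes: each of the $\Delta(q)$ sign changes of $q$ produces one, and the final flip $b_0 \mapsto -b_0 = a_0$ contributes the extra one.

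\emph{Lower bound.} I construct a particular $q$ with $\Delta(q) = \Delta(p) - 1$ by a hyperfield analogue of synthetic division: start with $b_{n-1} = a_n$ and iteratively define $b_{i-1} \in a_i \boxplus b_i$. When this set is a singleton the value is forced; when it equals all of $\Si$ (which happens exactly when $a_i$ and $b_i$ have opposite nonzero signs) I resolve the ambiguity so that each sign change of $p$, apart from the one that will be consumed by the boundary condition, becomes a sign change of $q$. One then checks that the construction terminates with the required $b_0 = -a_0$ and that $\Delta(q) = \Delta(p) - 1$. The main technical obstacle is the bookkeeping of zero coefficients in both $p$ and $q$, which is best handled by working with the compressed sign sequence of nonzero coefficients and verifying that runs of zeros simply propagate the current $b$-value through the synthetic division without affecting the sign-change count.
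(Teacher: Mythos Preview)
The paper does not itself prove this statement; it is quoted as a theorem of Baker--Lorscheid and attributed to \cite[Theorem~3.1]{BL}. However, the paper's proof of its own main result (Theorem~A for $\TR$) follows exactly the skeleton you outline---reduce by induction to $\max\{\Delta(q):p\in(x-1)q\}=\Delta(p)-1$ and then prove separate upper and lower bounds---and for the upper bound it explicitly defers to Baker--Lorscheid's argument over $\Si$. So your approach is the standard one and matches what the paper invokes.

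Two minor differences in the lower bound are worth noting. Your construction is an iterative synthetic division, choosing $b_{i-1}\in a_i\boxplus b_i$ top-down and resolving ambiguities as they arise. The paper (following Baker--Lorscheid) instead writes down $q$ by an explicit closed formula keyed to the position $r$ of the first sign change: roughly, $d_i=-c_{n_1}$ for $i<r$ and $d_i$ equal to the next nonzero $c_k$ with $k>i$ for $i\ge r$. Both produce a valid $q$ with $\Delta(q)=\Delta(p)-1$; the explicit formula has the advantage that the verification of $c_i\in d_{i-1}\boxplus(-d_i)$ is a short case check, whereas your iterative construction leaves the termination condition $b_0=-a_0$ and the bookkeeping of zeros as something still to be verified.

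On that bookkeeping: your upper-bound sketch is correct in spirit, but the line ``the final flip $b_0\mapsto -b_0=a_0$ contributes the extra one'' silently assumes $b_0\neq 0$. When $a_0=b_0=0$ the extra sign change instead appears at the lowest index $i$ with $b_i\neq 0$, where $b_{i-1}=0$ forces $a_i=-b_i$. You flag this issue yourself, and working with compressed sign sequences (or first reducing to the case $a_0\neq 0$ by dividing out powers of $x$, which is unambiguous over any hyperfield) resolves it cleanly.
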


This multiplicity is equal (modulo pairs of complex roots) to the number of positive roots of any lifting of $p$ to a real-closed field.

\begin{theorem*}[Baker, Lorscheid]
For the tropical hyperfield $\T$, \[ \mult^\T_{a}(p) = \mult^\K_1(\In_{-a}(p)) = n - m \] if $\In_{-a}(p) = \sum_{i = m}^n c_ix^i$ with $c_mc_n \ne 0$. Here $\K = \{0, \infty\} \subseteq \T$ is the Krasner hyperfield---the hyperfield analogue of the Boolean semiring.
\end{theorem*}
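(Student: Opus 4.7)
The plan is to prove the two equalities separately: first the Krasner identity $\mult^\K_1(q) = n - m$ for $q = \sum_{i=m}^n c_i x^i$ with $c_m c_n \ne 0$, then reduce the tropical multiplicity to this via initial forms.

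For the Krasner computation, I would induct on $n - m$. The base case $m = n$ gives $q(1) = c_m \ne 0$ in $\K$, so the multiplicity is zero. When $m < n$, the sum $q(1)$ contains at least two copies of $1 \in \K$, hence contains $1 \boxplus 1 = \{0,1\}$ and in particular $0$, giving $\mult^\K_1(q) \ge 1$. To bound the multiplicity of a witness $q' = \sum d_i x^i$ satisfying $q \in (x+1)q'$ (since $-1 = 1$ in $\K$), I analyze the relation $c_i \in d_{i-1} \boxplus d_i$ coefficient by coefficient. Outside $[m, n]$ the vanishing $c_i = 0$ forces $d_{i-1} = d_i$, so finite support forces $d_i = 0$ for $i \ge n$ and for $i < m$; matching at the endpoints $i = n$ and $i = m$ then forces $d_{n-1} = d_m = 1$. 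Hence $\supp(q') \subseteq [m, n-1]$ with both endpoints in its support, and by induction $\mult^\K_1(q') \le n - 1 - m$, with equality realized by $q' = \sum_{i=m}^{n-1} x^i$. Therefore $\mult^\K_1(q) = n - m$.

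For the tropical equality $\mult^\T_a(p) = \mult^\K_1(\In_{-a}(p))$, the strategy is to show both sides obey the same recursion. The base cases match: $p(a) \not\ni 0$ in $\T$ precisely when the tropical maximum defining $p(a)$ is attained exactly once, equivalently when $\In_{-a}(p)$ is a single monomial ($m = n$), equivalently $\mult^\K_1(\In_{-a}(p)) = 0$ by the Krasner computation. For the recursive step, the key claim is a pair of compatibilities between tropical and Krasner factorizations: (i) if $p \in (x - a)q$ in $\T[x]$, then after applying the hyperfield homomorphism $\T \to \K$ collapsing nonzero elements to $1$, one obtains $\In_{-a}(p) \in (x+1) \In_{-a}(q)$ in $\K[x]$; and (ii) every factorization $\In_{-a}(p) \in (x+1)q''$ in $\K[x]$ lifts to some $q \in \T[x]$ with $p \in (x-a)q$ and $\In_{-a}(q) = q''$. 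Combined with the inductive hypothesis applied to $q$, (i) and (ii) give the equality.

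The main obstacle is the lifting statement (ii). Given $q''$ witnessing the Krasner factorization, one must construct $q \in \T[x]$ coefficient by coefficient, specifying the tropical values on $\supp(q'')$ so that the initial form comes out to be $q''$, and then choosing the remaining ``sub-leading'' coefficients (those not contributing to the tropical maximum) to realize the containment $p \in (x-a)q$ as polynomials over $\T$. The flexibility here comes from the set-valued nature of tropical addition: each coefficient of $(x-a)q$ is a hyperset, and it suffices that the corresponding coefficient of $p$ lies in it. Carrying this out rigorously—and tracking which terms are ``active'' in the tropical maximum at each step—is where the bulk of the technical work lies.
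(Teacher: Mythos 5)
The paper does not prove this theorem; it cites it from Baker--Lorscheid. The natural point of comparison is therefore the paper's own proof of the analogous $\TR$ statement (Theorem~\ref{thm:main}), which uses the same two-pronged induction you are aiming for: every factorization $p\in(x-a)q$ can only shorten the relevant edge (upper bound), and some explicit factorization shortens it by exactly one (lower bound).

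Your Krasner computation and the overall skeleton are sound, but there are two genuine gaps. First, your step (i) is not a consequence of ``the hyperfield homomorphism $\T\to\K$ collapsing nonzero elements to $1$.'' That map sends $p=\sum c_i x^i$ to $\sum_{c_i\ne\infty}x^i$, which records the support of $p$, not $\In_{-a}(p)$, and likewise for $q$; applying Lemma~\ref{lem:morphism-factorization} to it gives a statement about supports, not about initial forms. To get what you want you need the composite used in the paper's $\TR$ argument: substitute $x\mapsto ax$, rescale by the common value $C=\min_i(c_i+ia)$ so the edge $\sigma$ sits on the $x$-axis, and only then apply the residue morphism $\mathcal{O}_\T=\R_{\ge 0}\cup\{\infty\}\to\K$. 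That composite does take $p$ to $\In_{-a}(p)$, but to pass the factorization through it you must also check that the same rescaling puts $q(ax)$ into $\mathcal{O}_\T[x]$ with its relevant edge on the axis---concretely, that $\min_i\bigl(d_i+(i+1)a\bigr)=C$ when $p\in(x-a)q$. This is true, but it is a lemma in its own right and nothing in your write-up addresses it.

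Second, step (ii)---lifting an arbitrary Krasner factorization $\In_{-a}(p)\in(x+1)q''$ to a tropical factorization $p\in(x-a)q$ with $\In_{-a}(q)=q''$---is the crux of the lower bound and you explicitly leave it undone. It is also stronger than what you need: the inductive argument only requires one factorization achieving the Krasner maximum to lift, and proving the ``every $q''$ lifts'' version would force you to rule out degenerate interactions between $q''$ and the off-edge coefficients of $p$ that play no role in the final count. The cleaner route (and the one the paper follows for $\TR$) is to construct $q\in\T[x]$ directly, coefficient by coefficient, guided by the Newton polygon: inside $\sigma$ take $d_i$ so the edge of $\newt(q)$ has horizontal length one less, and outside $\sigma$ propagate the running minimum of the $|c_j|$ so that $c_i\in d_{i-1}\boxplus(a\cdot d_i)$ holds. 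Until you supply either that construction or a proof of your lifting claim, the argument is incomplete. (A minor point: you repeatedly say ``tropical maximum''; with the paper's conventions for $\T$ it is the minimum of $c_i+ia$ that must be achieved twice.)
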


This multiplicity is equal to the number of roots of $P$ with valuation $a$ for any lifting $P$ of $p$ to an algebraically closed, non-Archimedean valued field.

In this paper, we prove a similar result for the signed tropical hyperfield, $\TR$ which combines features of both the tropical and signed hyperfields.

\begin{introtheorem}
For an element $a = (+1, r)$ of the signed tropical hyperfield,
\[ \mult^{\TR}_{a}(p) = \mult^\Si_{+1}(\In_{-r}(p)) = \Delta(\In_{-r}(p)).\]
\end{introtheorem}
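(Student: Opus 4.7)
The plan is to induct on $\deg p$ using the recursive definition of $\mult^{\TR}_a$. Since Baker--Lorscheid's theorem already gives $\mult^{\Si}_{+1}(\In_{-r}(p)) = \Delta(\In_{-r}(p))$, it suffices to prove the first equality, $\mult^{\TR}_a(p) = \Delta(\In_{-r}(p))$.

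I would first dispose of the base case $p(a) \not\ni 0$. Unwinding the $\TR$-evaluation of $p$ at $a = (+1, r)$, the terms $c_i \cdot a^i$ of minimal tropical valuation all carry a common sign precisely when $p(a) \not\ni 0$; these are exactly the terms that survive in $\In_{-r}(p)$, so the condition translates to $\Delta(\In_{-r}(p)) = 0$. Both sides of the claimed identity then vanish.

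The inductive step rests on the following compatibility lemma, which I view as the main content: for all $p, q \in \TR[x]$,
\[
p \in (x - a) \cdot q \quad\Longrightarrow\quad \In_{-r}(p) \in (x - 1) \cdot \In_{-r}(q) \text{ in } \Si[x].
\]
This is essentially a statement about Newton polygons: multiplication by $a$ shifts tropical valuations by exactly $r$, so the face of the Newton polygon of $(x - a) q$ at slope $-r$ is the face of $q$ extended by one unit, and the signs of the surviving coefficients transform by the signed multiplication $(x - 1) \cdot \In_{-r}(q)$ in $\Si[x]$. Combining this lemma with the Baker--Lorscheid result over $\Si$ yields $\Delta(\In_{-r}(q)) \le \Delta(\In_{-r}(p)) - 1$ for any such $q$, and the induction hypothesis then produces the upper bound $\mult^{\TR}_a(p) \le \Delta(\In_{-r}(p))$.

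The reverse inequality---exhibiting a factorization that achieves the bound---is the principal obstacle. The strategy is to first pick a witnessing factorization $\In_{-r}(p) \in (x - 1) \cdot \bar q$ in $\Si[x]$ with $\Delta(\bar q) = \Delta(\In_{-r}(p)) - 1$, whose existence is guaranteed by Baker--Lorscheid over $\Si$, and then lift $\bar q$ to some $q \in \TR[x]$ with $\In_{-r}(q) = \bar q$ and $p \in (x - a) \cdot q$. I would build such a $q$ coefficient-by-coefficient, working from the top degree downward and using the slack in the non-leading tropical coefficients to simultaneously satisfy the $\TR$-containments $p_i \in q_{i-1} - a \cdot q_i$ while pinning the signs and valuations prescribed by $\bar q$ on the initial-form terms. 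Making this lift work uniformly across all slopes of the Newton polygon of $p$---not only at the slope $-r$ being isolated by $\In_{-r}$---is the technical heart of the argument, and is where I expect the interaction between the sign structure of $\Si$ and the valuation structure of $\T$ inside $\TR$ to require the most care.
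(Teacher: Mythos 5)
Your upper-bound argument is essentially the paper's: the ``compatibility lemma'' relating $p \in (x-a)q$ to $\In_{-r}(p) \in (x-1)\In_{-r}(q)$ is what the paper obtains by applying Lemma~\ref{lem:morphism-factorization} to the composite morphism $p \mapsto (\gamma^{-1}p(ax))|_{t=0}$ from $\TR[x]$ through $\OTR[x]$ to $\Si[x]$, and this composite computes the signed initial form. One caveat: your heuristic that ``the face of $(x-a)q$ at slope $-r$ is the face of $q$ extended by one unit'' is not literally correct---$p \in (x-a)q$ is a containment, not a product, so cancellation in the hyperaddition can push $\newt(p)$ strictly above the would-be product polygon---and you also need to verify that the rescaled cofactor $s = \gamma^{-1}q(ax)$ lands in $\OTR[x]$ (so that $\cdot|_{t=0}$ applies) and that $\Delta(s|_{t=0}) = \Delta_\sigma(q)$; these are fixable but must be checked.

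The genuine gap is the lower bound. You propose to fix a witnessing $\Si$-factorization $\In_{-r}(p) \in (x-1)\bar q$ and lift $\bar q$ to some $q \in \TR[x]$ with $p \in (x-a)q$, ``coefficient-by-coefficient, working from the top degree downward,'' but you only describe the difficulty without resolving it. The paper explicitly declines the lifting route, noting that $\OTR[x] \to \Si[x]$ is not invertible, and instead constructs $q = \sum d_ix^i$ directly in $\OTR[x]$ by an explicit three-part formula: for $i \le n_1$ (left of the edge $\sigma$), $d_i = -c_k$ with $k$ minimal such that $|c_k| = \min_{j \le i}|c_j|$; for $n_1 \le i \le r$ (along $\sigma$ up to the first sign change), $d_i = -c_{n_1}$; and for $r \le i \le n$, $d_i = c_k$ with $k > i$ minimal such that $|c_k| = \min_{j > i}|c_j|$. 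Note this sweeps from both ends of the polynomial inward rather than top-down, and the two outer regimes are exactly what takes care of the coefficients away from $\sigma$ that you flag as the danger point. So you have correctly located the technical heart of the proof, but you have not supplied it: that a lift exists, that ``slack'' in the non-initial coefficients suffices, and that any single sweep direction is consistent are precisely the claims that remain unproven in your sketch.
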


This multiplicity is an upper bound on the number of roots with valuation $r$ and sign $+1$ of any lifting of $p$ to a real-closed, non-Archimedean-ordered valued field. The gap between this upper bound and the number of such roots is an even number corresponding to pairs of complex-conjugate roots.

\begin{remark}
If the sign of $a$ is negative ($a = (-1, r)$) then the multiplicity is
\[ \mult_a^\TR (p) = \mult_{-a}^\TR (p(-x)) = \mult_{-1}^\Si (\In_{-r}(p)) = \Delta(\In_{-r}p(-x)). \]

Additionally, for any hyperfield $H$,
\[ \mult_0^H \left( \sum c_i x^i \right) = \min \{i : c_i \neq 0\}. \]
Thus the theorems of Baker-Lorscheid and Theorem~A completely classify the multiplicities over $\Si, \T, \TR$ respectively.
\end{remark}

We also give a proof of a related theorem in the language of polynomials over a real-closed, non-Archimedean-ordered valued field.

\begin{introtheorem}
    Let $K$ be a real-closed, non-Archimedean-ordered valued field. Let $P(x)$ be a polynomial over $K$. Then the number of real roots of $P$ which are positive and have valuation $r$ is bounded above by and congruent mod $2$ to $\Delta(\In_{-r}(P))$.
\end{introtheorem}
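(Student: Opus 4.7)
The plan is to factor $P$ over the real-closed field $K$ and track how the initial-form operator acts on each factor. Since $K$ is real-closed, I can write $P(x) = c \prod_i (x - \alpha_i)^{k_i} \prod_j \bigl( (x - \beta_j)^2 + \gamma_j^2 \bigr)^{\ell_j}$ with $\alpha_i, \beta_j, \gamma_j \in K$, the linear factors corresponding to real roots and the irreducible quadratics to complex-conjugate pairs in $K[\sqrt{-1}]$. The key preliminary fact is that $\In_{-r}$ is multiplicative up to a unit of the residue field (it arises from substituting $x = t_r y$ for a positive element $t_r$ of valuation $r$, factoring out the minimum-valuation power of $t_r$, and reducing modulo the maximal ideal), so $\In_{-r}(P)$ is the product of the $\In_{-r}$ of its factors.

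Next I would analyze each factor. For a linear $L = x - \alpha$, the substitution $x = t_r y$ gives: if $v(\alpha) = r$ and $\alpha = t_r u$, then $\In_{-r}(L) = y - \bar u$ (a simple root whose sign matches $\alpha$); if $v(\alpha) > r$ then $\In_{-r}(L) = y$; if $v(\alpha) < r$ then $\In_{-r}(L)$ is a nonzero constant. Hence, counting multiplicities, linear factors contribute a positive root to $\In_{-r}(P)$ exactly when the corresponding root of $P$ is positive with valuation $r$. For a quadratic $Q = (x - \beta)^2 + \gamma^2$, a short case analysis on $s = \min(v(\beta), v(\gamma))$ versus $r$ shows $\In_{-r}(Q)$ is one of $y^2$, a positive constant, $y^2 + \bar c^2$, $(y - \bar u)^2 + \bar c^2$, or the perfect square $(y - \bar u)^2$ (this last occurring when $v(\beta) = r < v(\gamma)$). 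In every case the number of positive roots of $\In_{-r}(Q)$, with multiplicity, is $0$ or $2$---always even---and this persists after raising to the power $\ell_j$.

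Since the residue field $k$ of $K$ is itself real-closed (standard for henselian ordered valued fields), classical Descartes's rule of signs applies to $\In_{-r}(P)$ as a polynomial over $k$. Writing $N$ for the number of positive roots of $P$ with valuation $r$ and $M$ for the number of positive roots of $\In_{-r}(P)$ in $k$---both with multiplicity---the factor analysis above gives $N \le M$ with $M - N$ a nonnegative even integer, while Descartes over $k$ gives $M \le \Delta(\In_{-r}(P))$ with $M \equiv \Delta(\In_{-r}(P)) \pmod 2$. Chaining these yields $N \le \Delta(\In_{-r}(P))$ and $N \equiv \Delta(\In_{-r}(P)) \pmod{2}$, which is the theorem.

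The main obstacle I anticipate is the quadratic subcase $v(\beta) = r < v(\gamma)$, which produces a \emph{perfect-square} initial form $(y - \bar u)^2$: a complex-conjugate pair of $P$ can thus contribute $2$ positive roots to $M$ while contributing $0$ to $N$, and this is precisely what forces the bound to be an inequality rather than an equality and what accounts for the mod-$2$ gap. A secondary point to justify carefully is that $K$ admits a positive element $t_r$ of each valuation $r$ (needed to define the substitution); this follows from $K$ being real-closed with divisible value group, so that any element of valuation $r$ can be multiplied by a square to be made positive.
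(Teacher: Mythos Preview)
Your argument is correct. It shares the same skeleton as the paper's proof---substitute $x \mapsto t_r y$, divide out the minimal valuation, reduce modulo $\mathfrak{m}_K$ to land in $k[y]$, and apply Descartes's rule over the real-closed residue field---but the bookkeeping for the parity statement is organized differently. The paper passes to the algebraic closure $K[\sqrt{-1}]$, uses that $K$ is Henselian to extend the valuation, factors the reduced polynomial $\overline{R}$ linearly over $k[\sqrt{-1}]$, and then appeals to Galois invariance of the extended valuation to pair off the non-real contributions. You instead factor $P$ over $K$ itself into linear and irreducible quadratic factors, invoke multiplicativity of $\In_{-r}$ (i.e.\ Gauss's lemma for the content), and dispose of each quadratic by a direct case analysis on $\min(v(\beta),v(\gamma))$ versus $r$. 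Your version is more hands-on but also more self-contained: it never leaves $K$ and avoids both the Henselian extension and the Galois-invariance step. The paper's version is terser and more conceptual. Either way, the crucial phenomenon---a conjugate pair with $v(\beta)=r<v(\gamma)$ whose initial form collapses to the real square $(y-\bar u)^2$---is exactly what both arguments isolate as the source of the even gap between the root count and $\Delta(\In_{-r}(P))$.
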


We can think of $\Delta(\In_{-r}(P))$ as counting the number of sign changes along the edge of slope $-r$ in the Newton polygon of $P$ as explained in the following example.

\begin{example} \label{ex:first}
    Let us consider the field $\R((y))$ of Laurent series in $y$.
    Define a valuation $v$ on $\R((y))$ by
    \[ v\left( \sum_i a_iy^i \right) = \min\{i : a_i \neq 0\}. \]

    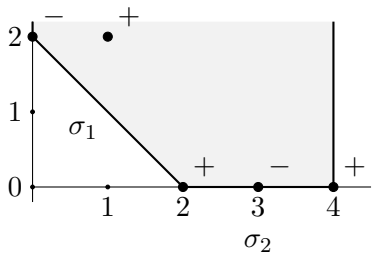
\begin{figure}[htbp]
        \centering
        \begin{tikzpicture}
            \filldraw [color=black!5,draw=black,thick] (0,2.2) -- (0,2) -- (2,0) -- (4,0) -- (4,2.2);

            \foreach \x/\y/\s in {0/2/-,1/2/+,2/0/+,3/0/-,4/0/+}
                \filldraw (\x,\y) circle (0.6mm) node [above right] {$\s$};

            \draw (0,-0.2) -- (0,2.2);
            \foreach \y in {0,1,2}
                \filldraw (0,\y) circle (0.25mm) node [left] {$\y$};

            \draw (0,0) -- (4.5,0);
            \foreach \x in {1,2,3,4}
                \filldraw (\x,0) circle (0.25mm) node [below] {$\x$};

            \draw (1,1) node [below left] {$\sigma_1$};
            \draw (3,-0.5) node [below] {$\sigma_2$};
        \end{tikzpicture}
        \caption{Newton polygon of Example~\ref{ex:first} with the signs of the associated coefficients indicated.}
        \label{fig:classical-example}
    \end{figure}

    Let us consider the Newton polygon of the following polynomial over $R((y))$:
    \[ P(x) = (x - y)(x + y)(x - 1)^2 = -y^2 + 2y^2x + (1 - y^2)x^2 - 2x^3 + x^4. \]
    Let $c_i(y)$ be the coefficient of $x^i$. We plot the points $(i, v(c_i))$, and take the lower convex hull to form the Newton polygon. See Figure~\ref{fig:classical-example}. We will also write the sign of the leading term of $c_i$ at the associated point.

    Theorem~B tells us that the number of roots of $P$ whose leading term is linear and whose leading coefficient is positive is bounded above, and congruent modulo $2$ to the number of sign changes inside the edge $\sigma_1$ from $(0,2)$ to $(2,0)$. Note that the sign at $(1,2)$ is irrelevant because that point is not inside $\sigma_1$. There is 1 sign change inside $\sigma_1$ and it corresponds to the root $x = y$.
    
    Additionally, the number of roots whose leading term is constant, and whose leading coefficient is positive is bounded above by the number of sign changes inside the edge $\sigma_2$ from $(2,0)$ to $(4,0)$. There are 2 sign changes here, corresponding to the double root at $x = 1$.

    Finally, if we look at the signs of $P(-x)$ then $\sigma_1$ still has one sign change, corresponding to the root $x = -y$, but $\sigma_2$ now has $0$.
\end{example}

We can think of Theorem~A as providing the maximal multiplicity over all possible lifts. We will see that this maximum is achieved for some lift in Theorem~\ref{thm:lifting} although the lift constructed there only guarantees that the leading terms of roots in the lift are real.

\subsection{Acknowledgements}
The author thanks Matt Baker, Marcel Celaya, Tim Duff, Oliver Lorscheid and Josephine Yu for their helpful discussions regarding the content of this paper.

In particular, the author thanks Matt Baker for suggesting what the Newton polygon rule should look like, for his encouragement on completing this project and for his comments on an earlier draft of this paper. The author thanks Marcel Celaya for pointing out how to use initial forms in the proof of Theorem~\ref{thm:classical}. The author thanks Yoav Len for helpful feedback on the first draft of this paper. And finally, the author thanks Oliver Lorscheid for suggesting the map $\cdot |_{t = 0} \colon \OTR \to \Si$ defined in Section~\ref{sec:morphisms} that is needed to prove claim (1) of the Theorem~\ref{thm:main}.

\section{Preliminaries}
\subsection{Real fields}
A \emph{real field} (or \emph{formally-real field}) is a field in which $-1$ is not a sum of squares. Every real field admits a not-necessarily-unique ordering that makes it into an ordered field.

A real field which admits no proper, real algebraic extensions is called \emph{real-closed}. Every real-closed field has a unique ordering that makes it an ordered field and that ordering is defined by $x \ge 0$ if $x = y^2$ for some $y$. By a theorem of Artin and Schreier, $K$ is real-closed if and only if $K$ is not algebraically closed and the algebraic closure of $K$ is $K[\sqrt{-1}]$.

If $K$ is a real-closed field, its \emph{sign function} is defined by $\sign(x) = +1$ if $x > 0$, $\sign(x) = 0$ if $x = 0$ and $\sign(x) = -1$ if $x < 0$. Additionally, $K$ has an absolute value given by $|x| = \sign(x) x$.

\subsection{Valued fields} \label{sec:valuedfields}
A \emph{rank 1, non-Archimedean valued field} $K$ is a field together with a map $v \colon K \to \R \cup \{\infty\}$ such that $v(0) = \infty$ and such that $v \colon (K^\times, \cdot) \to (\R, +)$ is a group homomorphism that satisfies the relation
\[ v(x + y) \ge \min\{v(x), v(y)\} \]
for all $x,y \in K$. For simplicity, we will refer to these as just ``valued fields.'' We call the map $v$ a \emph{(non-Archimedean) valuation}.

The valuation ring of $K$ is the ring $\mathcal{O}_K \coloneqq \{ x \in K : v(x) \ge 0 \}$. This is a local ring with maximal ideal $\mathfrak{m}_K \coloneqq \{ x \in K : v(x) > 0 \}$. We call $k \coloneqq \mathcal{O}_K/\mathfrak{m}_K$ the residue field of $K$. If $K$ is algebraically closed then so is its residue field.

\subsubsection{Ordered valued fields}
An ordering on a valued field is called \emph{Arch\-i\-me\-de\-an} if for every $x \in K$, there is a natural number $n$ such that $x \le n$. A \emph{non-Archimedean order} is an order which is not Archimedean.

\begin{remark}
When $K$ is real, it is not necessarily true that $k$ is real. For example $\Q$ is real but with a $p$-adic valuation, its residue field is not real. A necessary and sufficient condition for $k$ to be real is that $\mathcal{O}_K$ is convex with respect to the ordering on $K$ \cite[Proposition~2.2.4.]{EP}.
\end{remark}

\begin{definition}
If $\le$ is an ordering on $K$, then the \emph{$\le$-convex-hull} of a subring $R$ of $K$ is the set
\[ \mathcal{O}_R(\le) = \{ x \in K : x \le a \text{ and } -x \le a \text{ for some } a \in R \}. \]
$R$ is \emph{$\le$-convex}---or simply \emph{convex}---if $R = \mathcal{O}_R(\le)$.
\end{definition}

When $\mathcal{O}_K$ is convex, then the ordering on $K$ is non-Archimedean \cite[Corollary~2.2.6.]{EP} and we call $K$ a \emph{non-Archimedean-ordered field}. Going forward, when we say $K$ is a real (or real-closed) valued field, we mean that $K$ is real (or real-closed) and its residue field is also real (or real-closed). We need this hypothesis in order to have a map from $K$ to the signed tropical hyperfield (Example~\ref{ex:TRmaps}).

If $K$ and $k$ are both real-closed, then $K$ is Henselian and the valuation on $K$ extends uniquely to a valuation on the algebraic closure, $K[\sqrt{-1}]$ \cite[Theorem~4.1.3. and Theorem~4.3.7.]{EP}. Additionally, the residue field of $K[\sqrt{-1}]$ is $k[\sqrt{-1}]$. Furthermore, valuations on Henselian fields are Galois-invariant, in particular, two complex-conjugate roots of a polynomial must have the same valuation.

See \cite[Chapter~4.3]{EP} for more on Henselian valued fields and, in particular, for ordered (i.e. formally-real), Henselian valued fields.

\subsubsection{Hahn Series}
There are many examples of valued fields. Here we give one general family of valued fields which is well-suited for our discussion.

Let $k = \R$ or $\C$ (more generally any real or algebraically closed field) and let $\Gamma \subseteq \R$ be an ordered group. We call elements of
\[
    k[[t^\Gamma]] = \left\{ \sum_{\gamma \in I} a_\gamma t^\gamma : a_\gamma \in k, I \subset \Gamma \text{ is well-ordered} \right\}
\]
\emph{Hahn series}. Let us note that every well-ordered subset of $\R$ is countable.

When $k$ is algebraically closed (resp. real-closed), and $\Gamma$ is divisible, then the Hahn series form an algebraically closed (resp. real-closed) field \cite[Theorem~1]{ML}.

The two Hahn series fields we will make use of are $k[[t^\R]]$ with $k = \R$ or $k = \C$. By the above fact, those fields are real-closed and algebraically closed respectively.

There are surjective maps
\[ v_\C \colon \C[[t^\R]] \longrightarrow \R \cup \{ \infty \} \text{ and } v_\R \colon \R [[t^\R]] \longrightarrow (\{\pm 1\} \times \R) \cup \{\infty\} \]
given by $v_k(0) = \infty$ and otherwise
\begin{align*}
    v_\C\left( \sum_{\gamma \in I} c_\gamma t^\gamma \right) &= \gamma_0, \\
    v_\R\left( \sum_{\gamma \in I} c_\gamma t^\gamma \right) &= (\sign(c_{\gamma_0}), \gamma_0),
\end{align*}
where $\gamma_0 = \min I$.

$v_\C$ is a valuation on $\C[[t^\R]]$ that makes it (and $\R[[t^\R]]$) into a non-Archimedean valued field. The field $\R[[t^\R]]$ has a non-Archimedean ordering given by $0 < \sum_{\gamma \in I} c_\gamma t^\gamma$ if $0 < c_{\min I}$.

\subsection{Hyperfields} \label{sec:hyperfields}

The \emph{tropical} and \emph{signed tropical hyperfield} are respectively
\[ \T = (\R \cup \{\infty\}, \cdot, \boxplus, 0, \infty) \text{ and } \TR = ((\{\pm 1\} \times \R) \cup \{ \infty \}, \cdot, \boxplus, (1,0), \infty).\]
One way to define the multiplication, $\cdot$, and the hyperaddition, $\boxplus$, is via the maps $v_k$ on $k[[t^\R]]$.

The multiplication on $\T$ ($k = \C$) and on $\TR$ ($k = \R$) can be described by the rule
\[ a \cdot b = v_k(\alpha \cdot \beta) \text{ if } v_k(\alpha) = a \text{ and } v_k(\beta) = b. \]
The hyperaddition is defined by
\[ a \boxplus b = \{v_k(\alpha + \beta) : v_k(\alpha) = a, v_k(\beta) = b\}. \]
Note: multiplication is well-defined.

\begin{remark} \label{rem:Viro'shyperfield}
    Viro uses a different, but isomorphic, signed tropical hyperfield; he also calls it the \emph{real tropical hyperfield} \cite{V1, V2}. Viro's hyperfield has $\R$ as its underlying set and is related to ours via the isomorphism $(s, r) \mapsto s e^{-r}$ and $\infty \mapsto 0$ ($e = 2.718\dots$).
\end{remark}

Explicitly, the multiplication for $\T$ is given by the usual \textbf{addition} in $\R$. Multiplication for $\TR$ is given by $(s,r) \cdot (s',r') = (ss', r + r')$ (multiply the signs normally, multiply the valuations tropically).

Hyperaddition in $\T$ is given by
\[
    a \boxplus b =
    \begin{cases}
        \{a\} & a < b, \\
        \{b\} & b < a, \\
        [a,\infty] & a = b.
    \end{cases}
\]
Hyperaddition in $\TR$ is given by $(s, r) \boxplus \infty = \infty \boxplus (s,r) = \{ (s,r) \}$ and otherwise
\begin{equation} \tag{$*$} \label{eq:TRaddition}
    (s,r) \boxplus (s',r') =
    \begin{cases}
        \{ (s, r) \} & r < r', \\
        \{ (s', r') \} & r' < r, \\
        \{ (s, r) \} & r = r' \text{ and } s = s', \\
        \{ (\pm 1, t) : t \ge r \} \cup \{\infty\} & r = r' \text{ and } s = - s'.
    \end{cases}
\end{equation}

We define $a_1 \boxplus a_2 \boxplus \cdots \boxplus a_n$ inductively by
\[ a_1 \boxplus a_2 \boxplus \cdots \boxplus a_n = \bigcup_{b \in a_1 \boxplus \cdots \boxplus a_{n-1}} b \boxplus a_n. \]
One can check that this is the same as $\{v_k(\alpha_1 + \dots + \alpha_n) : v_k(\alpha_i) = a_i\}$.
Thus the addition is associative and commutative.

The \emph{Krasner hyperfield}, $\K$, is the set $\{0, 1\}$ with the usual multiplication and where $0 \boxplus x = \{x\}$ and $1 \boxplus 1 = \{0, 1\}$. We identify this with the subset $\{0, \infty\}$ of $\T$ via $0 \mapsto \infty, 1 \mapsto 0$. Note: $\K$ is the hyperfield analogue of the Boolean semiring $(\{0,1\}, \cdot, \max, 1, 0)$ and $\T$ of the tropical semiring $(\R, \cdot, \min, \infty, 0)$.

The \emph{sign hyperfield} (or \emph{hyperfield of signs}) is $\Si = (\{0, 1, -1\}, \cdot, \boxplus, 0, 1)$ with the usual multiplication and hyperaddition given by the following table
\begin{center}
    \begin{tabular}{c|c|c|c}
        $\boxplus$ & $0$ & $-1$ & $+1$ \\\hline
        $0$ & $\{0\}$ & $\{-1\}$ & $\{+1\}$ \\\hline
        $-1$ & $\{-1\}$ & $\{-1\}$ & $\Si$ \\\hline
        $+1$ & $\{+1\}$ & $\Si$ & $\{+1\}$
    \end{tabular}
\end{center}
We can identify $\Si$ with the subset $\{\infty, (1,0), (-1,0)\} \subset \TR$ where $\infty \leftrightarrow 0$ and $(\pm 1, 0) \leftrightarrow \pm 1$ (\emph{cf.} Remark~\ref{rem:Viro'shyperfield}). Thus $\Si$ is a sub-hyperfield of $\TR$.

We define $\OTR \coloneqq (\{\pm 1\} \times \R_{\ge 0}) \cup \{ \infty \} \subset \TR$. This is a sub-hyperring of $\TR$ and it equals the image of $\ORR$ under $v_\R$.

\begin{remark}
The ordering on $\R[[t^\R]]$ is given by $0 < \alpha$ if $\alpha \ne 0$ and the leading coefficient of $\alpha$ is positive. This induces an ordering on $\TR$ such that $\infty < (s,r)$ if $s = +1$.

More generally, $(s,r) < (s', r')$ if either
\begin{enumerate}
    \item $(s', r') \boxplus (-1,0) \cdot (s, r) = \{(+1, r'')\}$---the first three cases of the addition given in \eqref{eq:TRaddition},
    \item or $r = r'$ and $s < s'$.
\end{enumerate}
\end{remark}

\subsection{Axioms for hyperrings} We now list the axioms of hyperrings and hyperfields and indicate where they differ from those of rings and fields. Let us remark that every (commutative) ring is a hyperring and every field is a hyperfield, so the axioms cannot differ too greatly.

We will require these axioms to develop the theory of morphisms between hyperfields and multiplicities over hyperfields in this section and the next.
\begin{definition}
    A \emph{hyperring} $(H, \cdot, \boxplus, 1, 0)$ consists of
    \begin{itemize}[leftmargin=1em]
        \item An Abelian \emph{monoid-with-zero} $(H, \cdot, 1, 0)$ satisfying
        \begin{enumerate}[label=(M\arabic*),itemindent=5mm]
            \item $\cdot$ is associative and commutative,
            \item $1 \cdot x = x$ for all $x$,
            \item $0 \cdot x = 0$ for all $x$.
        \end{enumerate}
        These are the same axioms $(R,\cdot,1,0)$ satisfies if $R$ is a commutative ring.
        \item A binary \emph{hyperoperation} $\boxplus \colon H \times H \to \mathcal{P}(H) \setminus \{\varnothing\}$ which has the following axioms:
        \begin{enumerate}[label=(H\arabic*),itemindent=5mm]
            \item $\boxplus$ is commutative and associative, meaning
            \[ a \boxplus b = b \boxplus a \text{ and } \bigcup_{d \in b \boxplus c} a \boxplus d = \bigcup_{d \in a \boxplus b} d \boxplus c, \]
            \item $0 \boxplus a = \{a\}$ for all $a$,
            \item for every $a$ there exists a unique $-a$ such that $a \boxplus (-a) \ni 0$, \label{H:neg}
            \item $a \cdot (b \boxplus c) \coloneqq \{ ad : d \in b \boxplus c \} = ab \boxplus ac$,
            \item $a \in b \boxplus c$ if and only if $(-b) \in a \boxplus c$. \label{H:reverse}
        \end{enumerate}
        Axiom \ref{H:reverse} exists in lieu of a proper subtraction operation. Otherwise these axioms exactly mirror those of a commutative ring.
    \end{itemize}
\end{definition}

\begin{definition}
    A hyperring $(H, \cdot, \boxplus, 1, 0)$ is called a \emph{hyperfield} if additionally $0 \ne 1$ and every non-zero element of $H$ has a multiplicative inverse, in which case $({H \setminus \{0\}},\cdot,1)$ is an Abelian group.
\end{definition}

\begin{remark} \label{rem:abc}
    \ref{H:neg} implies that $a \in b \boxplus c$ if and only if $0 \in (-a) \boxplus b \boxplus c$ (because the inverse is unique).
\end{remark}

We leave the following lemma as an easy exercise.

\begin{lemma}
    If $R$ is any commutative ring and $G \subseteq R^\times$ is any multiplicative subgroup, then the set of cosets $H = R/G$ is a hyperring with the usual multiplication, $0 = [0]$ and $1 = [1]$ and whose hyperaddition is defined by
    \[ [a] \boxplus [b] = \{ [a' + b'] : a' \in [a], b' \in [b]\}. \]
\end{lemma}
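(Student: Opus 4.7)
The plan is to verify each hyperring axiom for $H = R/G$ by lifting to representatives in $R$ and then projecting the corresponding identity in the ring back down. First, multiplication descends to cosets: if $a' = a g_1$ and $b' = b g_2$ with $g_i \in G$, then $a' b' = (ab)(g_1 g_2)$ lies in $[ab]$ since $G$ is closed under multiplication. Consequently the monoid axioms (M1)--(M3) for $H$ follow immediately from those of $R$.

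For the hyperaddition, (H2) is just the computation $[0] \boxplus [a] = \{[0 + a'] : a' \in [a]\} = \{[a]\}$. For (H3), I would set $-[a] := [-a]$: existence of the inverse is witnessed by $a + (-a) = 0$, and uniqueness follows because if $a' + b' = 0$ with $a' = a g_1$, then $b' = -a g_1 = (-a) g_1 \in [-a]$. Commutativity of $\boxplus$ is immediate. Associativity is proved by unwinding both iterated hyperoperations to the same common set
\[ \{ [a' + b' + c'] : a' \in [a],\, b' \in [b],\, c' \in [c] \}, \]
where closure of $G$ under multiplication ensures that no $G$-translates are lost or introduced when we pass from a coset sum to an arbitrary triple of representatives.

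Distributivity (H4) reduces to the observation that $[a] \cdot ([b] \boxplus [c])$ unfolds to $\{[a b' + a c'] : b' \in [b], c' \in [c]\}$, while $[ab] \boxplus [ac]$ unfolds to $\{[(ab)g_1 + (ac)g_2] : g_1, g_2 \in G\}$; the two descriptions match via the identity $(ab)g_i = a(bg_i)$. Finally, for the reversibility axiom (H5), one unpacks the statement $[a] \in [b] \boxplus [c]$ as the existence of $g \in G$ and representatives $b' \in [b]$, $c' \in [c]$ with $ag = b' + c'$, and then rearranges this equation, using that $ag \in [a]$ and $-b' \in [-b]$, to exhibit the other element of the biconditional as lying in the required hyperaddition.

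The main obstacle throughout is entirely clerical: one must consistently keep track of how the $G$-ambiguity in each coset propagates through the sums and products at each step, but no genuinely new algebraic idea beyond the corresponding ring identities is ever invoked.
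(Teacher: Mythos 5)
The paper does not actually prove this lemma---it explicitly declares it ``an easy exercise''---so there is no reference proof to compare against; you have simply filled in the exercise. Your verification of (M1)--(M3) and (H1)--(H4) is correct and is the expected representative-chasing argument. The key points you isolate are the right ones: well-definedness of the product via closure of $G$ under multiplication, and, for associativity of $\boxplus$, the identity $(a'+b')g = (a'g) + (b'g)$ with $a'g \in [a]$ and $b'g \in [b]$, which absorbs the intermediate $G$-ambiguity so that both triple sums reduce to $\{[a'+b'+c']\}$.

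One caveat concerning (H5). As printed in the paper the axiom reads ``$a \in b \boxplus c$ if and only if $(-b) \in a \boxplus c$,'' but this has a sign error: already in $\mathbf{Z}$, $3 = 1 + 2$ does not give $-1 = 3 + 2$; and in $\Si$ one has $-1 \notin 1 \boxplus 1 = \{1\}$ while $-(1) = -1 \in (-1) \boxplus 1 = \Si$, so the printed biconditional fails even for a genuine hyperfield. The intended axiom is $(-b) \in (-a) \boxplus c$ (equivalently $c \in a \boxplus (-b)$). Your rearrangement of $ag = b' + c'$ to $-b' = c' + (-ag)$, with $-ag \in [-a]$ and $c' \in [c]$, in fact establishes precisely this corrected form, and it cannot produce a representative of $[a]$ on the right-hand side. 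Your phrasing---``exhibit the other element of the biconditional as lying in the required hyperaddition''---is too vague to tell whether you noticed the mismatch, so you should carry the rearrangement out explicitly and state which form of reversibility you are verifying, flagging the typo in the axiom as stated.
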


\begin{remark}
    This lemma implies that any field is a hyperfield ($G = \{1\}$) and that the hyperrings in Section~\ref{sec:hyperfields} satisfy the above axioms. Here
    \begin{itemize}
        \item $\K \cong k/k^\times$ for any field other than the field with 2 elements,
        \item $\Si \cong \R/\R_{> 0}$,
        \item $\T \cong \C[[t^\R]]/v_\C^{-1}(0)$,
        \item $\TR \cong \R[[t^\R]]/v_\R^{-1}((+1,0))$,
        \item $\OTR \cong \ORR/v_\R^{-1}((+1,0))$.
    \end{itemize}
\end{remark}

\subsection{Morphisms of hyperfields} \label{sec:morphisms}
A morphism $f\colon H_1 \to H_2$ between two hyperfields is a function satisfying $f(0) = 0, f(1) = 1$, $f(ab) = f(a)f(b)$ and $f(a \boxplus b) \subseteq f(a) \boxplus f(b)$.

In the following examples and proposition, let $R$ be any commutative ring.

\begin{example}
Maps $v \colon R \to \K$ correspond to prime ideals of $R$ where the prime ideal is $\mathfrak{p} = v^{-1}(0)$.
\end{example}

\begin{example}
Maps $v \colon R \to \Si$ correspond to a prime ideal $\mathfrak{p} = v^{-1}(0)$ and an ordering on $R/\mathfrak{p}$ given by $0 < x$ if $v(x) = +1$.
\end{example}

\begin{example}
Maps $v \colon R \to \T$ correspond to a prime ideal $\mathfrak{p} = v^{-1}(\infty)$ and a rank-1 valuation on $\fr(R/\mathfrak{p})$.
\end{example}

\begin{example} \label{ex:TRmaps}
Maps $v \colon R \to \TR$ correspond to
\begin{itemize}
    \item a prime ideal $\mathfrak{p} = v^{-1}(\infty)$;
    \item a rank-1 valuation on $\fr(R/\mathfrak{p})$;
    \item an non-Archimedean-ordering on $\fr(R/\mathfrak{p})$. \qedhere
\end{itemize}
\end{example}

The key morphisms that will be used in what is to follow are the arrows of the following diagram.

\begin{center}
    \begin{tikzcd}
        \R \arrow[r, shift left] \arrow[d, "\sign"'] &
            \ORR \arrow[r] \arrow[l, shift left, "\cdot |_{t = 0}"] \arrow[d, "v_\R"'] &
            \R[[t^\R]] \arrow[r] \arrow[d, "v_\R"'] &
            \C[[t^\R]] \arrow[d, "v_\C"'] \\
        \Si \arrow[r, shift left] &
            \OTR \arrow[r] \arrow[l, shift left, "\cdot |_{t = 0}"] &
            \TR \arrow[r, "|\cdot|"] \arrow[ll, bend left=40, "\sign"] &
            \T
    \end{tikzcd}
\end{center}
The unnamed arrows are inclusions. The map $\sign \colon \R \to \Si$ is the usual sign function. The maps $\sign \colon \TR \to \Si$ and $| \cdot | \colon \TR \to \T$ are the projection onto the first and second factor respectively, with $\sign(\infty) = 0$ and $|\infty| = \infty$.

The map $\cdot|_{t = 0}$ is defined for $\ORR$ as evaluation at $t = 0$ and for $\OTR$ as $(s,0)|_{t=0} = s$ and $(s,r)|_{t=0} = 0$ if $r > 0$.

\section{Polynomials and Multiplicities over a Hyperfield}
A polynomial over a hyperfield $H_1$ is a formal sum $p(x) = \sum_{i = 0}^n c_ix^i$ with $c_i \in H_1$. If $f\colon H_1 \to H_2$ is a morphism of hyperfields, then $f(p) \coloneqq \sum_{i = 0}^n f(c_i)x^i$.

We now show that factorization of polynomials is preserved under morphisms.

\begin{lemma}\label{lem:morphism-factorization}
    If $\sum a_ix^i \in \left( \sum b_jx^j \right)\left( \sum c_kx^k \right)$---which we take to mean that $a_i \in \smash{\bigboxplus\limits_{i = j + k}} b_jc_k$ for all $i$---and $f$ is any morphism of hyperfields, then \[ \sum f(a_i)x^i \in \left( \sum f(b_j)x^j \right)\left( \sum f(c_k)x^k \right). \]
\end{lemma}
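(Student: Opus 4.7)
The plan is to unfold the definition and propagate the morphism through each piece. Since the statement ``$\sum a_i x^i \in (\sum b_j x^j)(\sum c_k x^k)$'' is shorthand for the coefficient-wise containments $a_i \in \bigboxplus_{j+k=i} b_j c_k$, it suffices to show, for each fixed $i$, that $f(a_i) \in \bigboxplus_{j+k=i} f(b_j) f(c_k)$.

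First I would establish the auxiliary fact that morphisms respect iterated hyperaddition, namely $f(a_1 \boxplus \cdots \boxplus a_n) \subseteq f(a_1) \boxplus \cdots \boxplus f(a_n)$, where the left-hand side is shorthand for $\{f(x) : x \in a_1 \boxplus \cdots \boxplus a_n\}$. The base case $n = 2$ is the defining property of a morphism. For the inductive step, I would use the recursive definition given in the paper,
\[ a_1 \boxplus \cdots \boxplus a_n = \bigcup_{d \in a_1 \boxplus \cdots \boxplus a_{n-1}} d \boxplus a_n, \]
take $x$ in the left-hand side (so $x \in d \boxplus a_n$ for some $d$ in the first $n-1$), apply $f$ to get $f(x) \in f(d) \boxplus f(a_n)$, and then apply the induction hypothesis to $d$ to conclude $f(d) \in f(a_1) \boxplus \cdots \boxplus f(a_{n-1})$. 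Associativity and the recursive definition give $f(x) \in f(a_1) \boxplus \cdots \boxplus f(a_n)$.

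With this lemma in hand, the main statement is immediate. Given $a_i \in \bigboxplus_{j+k=i} b_j c_k$, applying $f$ and the iterated-hyperaddition fact yields
\[ f(a_i) \in \bigboxplus_{j+k=i} f(b_j c_k) = \bigboxplus_{j+k=i} f(b_j) f(c_k), \]
where the final equality is multiplicativity of $f$. This holds for every $i$, which is exactly the statement that $\sum f(a_i) x^i$ lies in the hyperproduct $\bigl(\sum f(b_j)x^j\bigr)\bigl(\sum f(c_k)x^k\bigr)$. There is no real obstacle here beyond being careful about what the set-valued product of polynomials means; the only mildly non-trivial step is the induction on iterated hypersums, and even that is a direct unwinding of the recursive definition the paper has already given.
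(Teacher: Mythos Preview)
Your proposal is correct and follows essentially the same approach as the paper: both argue by induction on the number of summands that a morphism sends an element of an iterated hypersum into the corresponding hypersum of images, then invoke multiplicativity. The only cosmetic difference is that the paper routes the base case through the equivalence $a \in b \boxplus c \Leftrightarrow 0 \in (-a) \boxplus b \boxplus c$ (Remark~\ref{rem:abc}), whereas you invoke the morphism axiom $f(a \boxplus b) \subseteq f(a) \boxplus f(b)$ directly; your route is slightly cleaner but the content is identical.
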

\begin{proof}
    By Remark~\ref{rem:abc}, $a \in b \boxplus c$ implies $0 \in (-a) \boxplus b \boxplus c$ implies $0 \in (-f(a)) \boxplus f(b) \boxplus f(c)$ implies $f(a) \in f(b) \boxplus f(c)$.

    By induction on the number of summands, $a_i \in \bigboxplus\limits_{i = j + k} b_jc_k$ implies
    \[ f(a_i) \in \bigboxplus\limits_{i = j + k} f(b_j)f(c_k) \]
    as required.
\end{proof}

\subsection{Multiplicities}

Baker and Lorscheid define zeroes and multiplicities over a hyperfield as follows \cite[Lemma~A, Definition~1.4]{BL}.

\begin{propdef}
    Let $p$ be a polynomial over a hyperfield $H$. The following are equivalent:
    \begin{enumerate}
        \item $0 \in \bigboxplus\limits_{i = 0}^n c_ia^i$
        \item There exist elements $d_0,\dots,d_{n-1} \in H$ such that
        \[ c_0 = -ad_0, c_i \in (-ad_i) \boxplus d_{i-1} \text{ for } i \in \{1,\dots,n-1\}, c_n = d_{n-1}. \]
    \end{enumerate}
    Baker and Lorscheid (after Viro) write $0 \in p(a)$ if (1) is satisfied. The classification (2) is new to Baker and Lorscheid's work and they abbreviate it as $p \in (x - a)q$ where $q = \sum d_ix^i$---this is compatible with the notation introduced in Lemma~\ref{lem:morphism-factorization}. We say that $a$ is a root (or zero) of $p$ if these conditions hold.
\end{propdef}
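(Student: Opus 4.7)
The approach is to mimic the classical synthetic-division algorithm inside the hyperfield, with the reversibility axiom~\ref{H:reverse} substituting for ordinary subtraction. Throughout, write $s_k \coloneqq \bigboxplus_{i = 0}^{k} c_i a^i$ for the partial evaluation sums.

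For the direction $(2) \Rightarrow (1)$, I would prove by induction on $k$ that $-a^{k+1} d_k \in s_k$. The base $k = 0$ is immediate from $c_0 = -a d_0$. For the inductive step, starting from $c_{k+1} \in -a d_{k+1} \boxplus d_k$, I multiply by $a^{k+1}$ and then rearrange via Remark~\ref{rem:abc} followed by a global sign flip to produce the membership $-a^{k+2} d_{k+1} \in -a^{k+1} d_k \boxplus c_{k+1} a^{k+1}$; combined with the inductive hypothesis and the inductive definition $s_{k+1} = \bigcup_{b \in s_k} b \boxplus c_{k+1} a^{k+1}$, this puts $-a^{k+2} d_{k+1}$ into $s_{k+1}$. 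At $k = n - 1$, since $d_{n-1} = c_n$, the invariant reads $-a^n c_n \in s_{n-1}$, and then $0 \in -a^n c_n \boxplus c_n a^n \subseteq s_n$ by the uniqueness of additive inverses (axiom~\ref{H:neg}).

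For the converse $(1) \Rightarrow (2)$, the case $a = 0$ is immediate (take $d_i \coloneqq c_{i+1}$), so assume $a \neq 0$. I would construct the $d_k$ by reverse induction from $k = n - 1$ down to $k = 0$, preserving the invariant $-a^{k+1} d_k \in s_k$. Setting $d_{n-1} \coloneqq c_n$, the hypothesis $0 \in s_n$ together with the uniqueness of inverses forces the witness in $s_{n-1}$ to equal $-a^n c_n$, establishing the invariant at $k = n-1$. For the inductive step, the invariant at $k$ furnishes some $b \in s_{k-1}$ with $-a^{k+1} d_k \in b \boxplus c_k a^k$; Remark~\ref{rem:abc}, division by the unit $a^k$, and a global sign flip rewrite this as $c_k \in -a d_k \boxplus (-a^{-k} b)$. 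Setting $d_{k-1} \coloneqq -a^{-k} b$ therefore realises the relation of condition~(2), and a short computation gives $-a^k d_{k-1} = b \in s_{k-1}$, advancing the invariant. When the recursion terminates at $k = 0$, the invariant reads $-a d_0 \in s_0 = \{c_0\}$, forcing $c_0 = -a d_0$.

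The principal technical obstacle is the sign bookkeeping in both directions: each pivot of an element out of a hyper-sum via \ref{H:reverse} costs a sign flip, and the correct choice $d_{k-1} = -a^{-k} b$ rather than $+a^{-k} b$ hinges on tracking which rearrangement preserves the invariant. Once that is pinned down, both implications reduce to clean inductions on the inductive definition of hyperaddition, with the invertibility of $a$ used only in the reverse direction to pin down $d_{k-1}$ uniquely from $b$.
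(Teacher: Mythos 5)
The paper does not prove this proposition--definition at all; it is imported verbatim from Baker and Lorscheid \cite[Lemma~A, Definition~1.4]{BL}, so there is no in-paper argument to compare against. Your proof is correct and is, in spirit, the same synthetic-division argument that Baker and Lorscheid use in their paper.

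A few remarks on the bookkeeping, all of which you have handled correctly. The invariant $-a^{k+1} d_k \in s_k$ is exactly the right one: in $(2) \Rightarrow (1)$ it follows from $c_{k+1} \in (-a d_{k+1}) \boxplus d_k$ by multiplying by the unit $a^{k+1}$, applying Remark~\ref{rem:abc} to pivot $c_{k+1}a^{k+1}$ out, multiplying through by $-1$, and pivoting $-a^{k+2}d_{k+1}$ back in; combined with the inductive definition $s_{k+1} = \bigcup_{b \in s_k} b \boxplus c_{k+1}a^{k+1}$ and the inductive hypothesis this gives $-a^{k+2}d_{k+1} \in s_{k+1}$, and the terminal application of axiom~\ref{H:neg} closes the loop. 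In $(1) \Rightarrow (2)$ the only place you genuinely need $a \neq 0$ is in inverting $a^k$ to extract $d_{k-1} = -a^{-k}b$ from the witness $b \in s_{k-1}$, and the uniqueness clause of axiom~\ref{H:neg} is what pins $b = -a^n c_n$ at the top of the recursion. The $a = 0$ case with $d_i = c_{i+1}$ is the right degenerate patch.

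One small thing worth flagging, not because it is wrong but because a reader might trip on it: the reverse induction in $(1)\Rightarrow(2)$ must be read as running over $k = n-1, n-2, \dots, 1$, producing $d_{k-1}$ at each step and simultaneously verifying $c_k \in (-ad_k) \boxplus d_{k-1}$, so that the relation is confirmed for exactly $k \in \{1,\dots,n-1\}$ as required; the boundary equalities $c_n = d_{n-1}$ and $c_0 = -ad_0$ come from the initial choice and the terminal reading of the invariant respectively. It would also be worth noting that everything implicitly assumes $\deg p = n \ge 1$, since for $n=0$ condition~(2) references nonexistent $d_i$; this is a harmless convention shared with the source.

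Finally, you might add a one-line remark that the factorization $p \in (x-a)q$ with $q = \sum d_i x^i$ produced in $(1)\Rightarrow(2)$ is typically far from unique, and that $\mult_a(q)$ depends on the chosen $q$ --- this is emphasised later in the paper and is the reason the multiplicity definition takes a maximum over all factorizations.
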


\begin{definition}
    For a polynomial $p(x) = \sum c_ix^i \in H[x]$ and $a \in H$, Baker and Lorscheid define $\mult_a(p)$ recursively by
    \[ \mult_a(p) =
    \begin{cases}
        0 & p(a) \not\ni 0 \\
        1 + \max\{\mult_a(q) : p \in (x - a) q\} & p(a) \ni 0.
    \end{cases}
    \]
    We will also use the notation $\mult_a^H(p)$ if we need to indicate that $p \in H[x]$.
\end{definition}

There are several pathologies to multiplicities over hyperfields. For instance, a polynomial can have infinitely many roots and even when there are finitely many roots, the sum of the multiplicities might exceed the degree of the polynomial.
Additionally, the factorization $p \in (x - a)q$ is generally not unique and $\mult_a(q)$ is generally not independent of $q$. We point the reader to \cite{BL} for these examples and further details.

\subsection{Newton Polygons}

\begin{definition}
    If $p = \sum_{i = 0}^n c_ix^i$ is a polynomial over a valued field $(K,v)$, its Newton polygon, $\newt(p)$, is the lower convex hull of $\{ (i, v(c_i)) : i = 0,\dots,n \}$. That is, it is the set defined by the lower inequalities of the convex hull where a lower inequality is an inequality $\langle u, x \rangle \ge 0$ with $u$ in the upper half-plane. See Figure~\ref{fig:newton-poly-example} for a picture.
\end{definition}

The definition of $\newt(p)$ only depends on the valuation of the coefficients, therefore it makes sense to define $\newt(p)$ the same way when $p \in \T[x]$. That is, $\newt(p)$ is the lower convex hull of $\{ (i, c_i) : i = 0,\dots,n \}$ when $p = \sum c_ix^i \in \T[x]$. When $p = \sum c_ix^i \in \TR[x]$, we define $\newt(p) = \newt(|p|)$ where $|p| \coloneqq \sum |c_i|x^i \in \T[x]$.

An \emph{edge} of $\newt(p)$ will always mean a bounded edge. The \emph{horizontal length} of such an edge is the length of its projection onto the $x$-axis.

\begin{example}
    Consider $p = (x - t^a)^n \in \C[[t^\R]][x]$. The valuation of the $i$-th coefficient is
    \[ v_\C\left( \binom{n}{i}t^{(n-i)a} \right) = (n-i)a. \]
    Therefore, the Newton polygon of $p$ is the lower convex hull of $\{(i, (n - i)a) : 0 \le i \le n\}$. This has a single edge of slope $-a$ and horizontal length $n$.
\end{example}

\begin{example}
    Let $p = 2 + \infty x + 1x^2 + (-\tfrac12) x^3 + (-1)x^4 + \tfrac12x^5 + 1x^6 \in \T[x]$. The Newton polygon of $p$ is pictured in Figure~\ref{fig:newton-poly-example}.

    \begin{figure}[htbp]
        \centering
        \begin{tikzpicture}
            \filldraw [color=black!5,draw=black] (0,2.2) -- (0,2) -- (3,-0.5) -- (4,-1) -- (6,1) -- (6,2.2);
            \foreach \x/\y in {0/2,2/1,3/-0.5,4/-1,5/0.5,6/1}
                \filldraw (\x,\y) circle (0.6mm);

            \draw (0,-1.2) -- (0,2.2);
            \foreach \y in {-1,0,1,2}
                \filldraw (0,\y) circle (0.25mm) node [left] {$\y$};
            \draw (0,0) -- (6.5,0);
            \foreach \x in {1,2,3,4,6}
                \filldraw (\x,0) circle (0.25mm) node [below] {$\x$};
            \filldraw (5,0) circle (0.25mm);
            \draw (1.5,0.75) node [below left] {$\sigma_1$};
            \draw (3.5,-0.75) node [below] {$\sigma_2$};
            \draw (5,0) node [below right] {$\sigma_3$};
        \end{tikzpicture}
        \caption{Newton polygon of \[2 + \infty x + 1x^2 + (-\tfrac12) x^3 + (-1)x^4 + \tfrac12x^5 + 1x^6\]}
        \vspace{-1.5em}
        \label{fig:newton-poly-example}
    \end{figure}

    This Newton polygon has three edges: $\sigma_1, \sigma_2, \sigma_3$ with horizontal lengths $3, 1$ and $2$ respectively. The slopes of the edges are respectively $-5/6, -1/2$ and $1$.
\end{example}

\subsection{Initial Forms}
Given a polynomial over $\T$ or $\TR$ we can speak of its \emph{initial form} which is roughly the sum of all monomials corresponding to the points contained inside a single edge of its Newton polygon.

Given a polynomial $p = \sum c_i x^i \in \T[x]$ and $a \in \T$. Let $C$ be the minimum of all the extended real numbers $c_i + a i \in \R \cup \{\infty\}$. Define
\[ \In_a(p) = \sum\{x^i : c_i + ai = C \} \in \K[x]. \]
If $\sigma$ is an edge of $\newt(p)$ with slope $-a$ then $i \in \supp(\In_a(p))$ if and only if $(i,c_i) \in \sigma$.

Likewise, for a polynomial $p = \sum c_i x^i \in \TR[x]$ and $a \in \T$. Let $C$ be the minimum of $|c_i| + a i$ and define
\[ \In_a(p) = \sum\{\sign(c_i)x^i : |c_i| + ai = C \} \in \Si[x]. \]

If $\sigma$ is an edge of $\newt(p)$ with slope $-a$ then we will define $\In_{\sigma}(p) = \In_a(p)$.

\subsection{Sign Changes}
For a polynomial $p$ over $\Si$ or over any ordered field, we define $\Delta(p)$ to be the number of sign changes in the coefficients of $p$, ignoring $0$s.

\begin{example}
    If $p = 1 + x + 0x^2 + 0x^3 - x^4 + 0x^5 - x^6 + x^7 \in \Si[x]$, then $\Delta(p) = 2$: one sign change from $x$ to $-x^4$, one from $-x^6$ to $x^7$.
\end{example}

For polynomials over $\TR$, what we are interested in are the number of sign changes along an edge $\sigma$ of $\newt(p)$. That is to say $\Delta(\In_{\sigma}(p))$.

If $a$ is a root of $p$ and $\sigma$ is the edge of $\newt(p)$ corresponding to $a$ (meaning the slope is $-a$), we define $\Delta_a(p) = \Delta_\sigma(p) \coloneqq \Delta(\In_{-a}(p))$. If $a$ is not a root of $p$, then we define $\Delta_a(p) = \Delta_{\sigma}(p) = 0$. Of course, if $a$ is not a root, then there is no edge $\sigma$ so we will avoid writing $\Delta_\sigma$ in that case.

\section{The Classical Multiplicity Formula}
In this section we prove Theorem~\ref{thm:classical}. The purpose of this section is to establish a multiplicity formula for formally real ordered fields without using any of the language about hyperrings.

In principle, this allows us to prove the corresponding theorem for $\TR$ (Theorem~\ref{thm:main}) by simply lifting the polynomial to, for example, $\R[[t^\R]]$ as in Theorem~\ref{thm:lifting} and then showing that the multiplicity is preserved under the map $v_\R \colon \R[[t^\R]] \to \TR$ (e.g. Lemma~\ref{lem:morphism-factorization}, \cite[Proposition~B]{BL}). However, we will give our proof of Theorem~\ref{thm:main} entirely within the context of polynomials over hyperrings to show that the theory is self-contained.

\setcounter{maintheorem}{1}
\begin{maintheorem} \label{thm:classical}
    Let $K$ be a real-closed, non-Archimedean-ordered valued field with residue field $k$. Let $P(x) = \sum_{i = 0}^n c_ix^i \in K[x]$ be a polynomial. Suppose the Newton polygon of $P$ has edges $\sigma_1,\dots,\sigma_r$ with slopes $\lambda_1,\dots,\lambda_r$ respectively.
    Let $P_j = \In_{\sigma_j}(P)$.

    Factor $P$ as $P = c_nQ(x)\prod_{i = 1}^l(x - \alpha_i)$ where $Q$ is irreducible, $\deg Q > 1$ and $\alpha_1, \dots, \alpha_l$ are the roots of $P$ in $K$. Then
    \begin{enumerate}
        \item for all $i$, there exists some $j$ such that $v(\alpha_i) = -\lambda_j$
        \item $\Delta(P_j) \ge \#\{i : v(\alpha_i) = -\lambda_j, \alpha_i > 0\}$ and the two quantities differ by an even number.
    \end{enumerate}
\end{maintheorem}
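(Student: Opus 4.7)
The plan is to reduce Theorem~\ref{thm:classical} to the classical Descartes rule over the real-closed residue field $k$, via a valuation-theoretic change of variables that exhibits the initial form $P_j$ as the reduction modulo $\mathfrak{m}_K$ of a rescaled polynomial. Part~(1) is immediate from this perspective: extend $v$ uniquely to the algebraic closure $L = K[\sqrt{-1}]$ (valid since $K$ is Henselian because both $K$ and $k$ are real-closed), and apply the classical Newton polygon theorem over $L$ to conclude that every root of $P$, and in particular every $\alpha_i \in K$, has valuation $-\lambda_j$ for some $j$.

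For part~(2), fix $j$ and set $s = -\lambda_j$. Because $K$ is real-closed, its value group is divisible, so pick $t_0 \in K_{>0}$ with $v(t_0) = s$. Let $M \coloneqq \min_i (v(c_i) + si)$---this minimum is attained exactly on the points of $\sigma_j$---and pick any $u \in K_{>0}$ of valuation $M$. Form
\[ \hat P(y) \coloneqq u^{-1} P(t_0 y) \in \mathcal{O}_K[y]. \]
The coefficient of $y^i$ in $\hat P$ is a unit of $\mathcal{O}_K$ with sign $\sign(c_i)$ if $(i, v(c_i)) \in \sigma_j$, and lies in $\mathfrak{m}_K$ otherwise. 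Since $\mathfrak{m}_K$ is convex in the non-Archimedean ordering, sign is preserved under reduction on units, so the reduction $\bar P \in k[y]$ satisfies $\Delta(\bar P) = \Delta(P_j)$. Moreover the substitution $x = t_0 y$ sets up a multiplicity-preserving bijection between positive roots $\alpha \in K$ of $P$ with $v(\alpha) = s$ and positive roots $\gamma \in K$ of $\hat P$ with $v(\gamma) = 0$; call this common count $N_+$.

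The key step is Hensel lifting combined with a parity count. For each positive root $\gamma_0 \in k$ of $\bar P$ of multiplicity $e$, Hensel's lemma (applied to the coprime factorization $\bar P = (y-\gamma_0)^e \cdot (\bar P/(y-\gamma_0)^e)$) produces a monic factor $\hat P_{\gamma_0} \in \mathcal{O}_K[y]$ of $\hat P$, of degree $e$, reducing to $(y - \gamma_0)^e$. All $e$ roots (with multiplicity) of $\hat P_{\gamma_0}$ in $L$ have valuation $0$ and reduce to $\gamma_0$; since every irreducible polynomial over the real-closed field $K$ has degree $1$ or $2$, they split into $K$-rational roots (each positive, as reduction preserves sign on units) and complex-conjugate pairs from $L \setminus K$. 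Writing $e'(\gamma_0)$ for the total multiplicity of the $K$-rational roots, $e - e'(\gamma_0)$ is twice the conjugate-pair count and hence even. Summing over positive $\gamma_0$ and invoking the classical Descartes rule for $\bar P \in k[y]$ yields
\[ N_+ = \sum_{\gamma_0 > 0} e'(\gamma_0) \;\le\; \sum_{\gamma_0 > 0} e(\gamma_0) \;\le\; \Delta(\bar P) = \Delta(P_j), \]
with $N_+ \equiv \Delta(P_j) \pmod{2}$, as required.

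The principal obstacle is this lifting step: one must apply Hensel's lemma to the non-squarefree factor $(y - \gamma_0)^e$ of $\bar P$ (legitimate because $(y - \gamma_0)$ is coprime to the complementary factor in $k[y]$) and control the parity of $K$-rational versus complex-conjugate lifts, which relies essentially on the Artin--Schreier dichotomy $[L : K] = 2$ so that every non-real root of a polynomial over $K$ appears with its complex conjugate.
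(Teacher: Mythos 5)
Your proof is correct and follows essentially the same strategy as the paper's: rescale by a positive element of the right valuation so the edge $\sigma_j$ lands on the $x$-axis, reduce modulo $\mathfrak{m}_K$ to obtain a polynomial over the real-closed residue field $k$ whose sign pattern matches $P_j$, apply the classical Descartes rule there, and absorb the defect into complex-conjugate pairs using the Artin--Schreier dichotomy and Galois-invariance of the valuation on $K[\sqrt{-1}]$. The only difference is one of exposition: where the paper simply asserts that the roots of the reduction are the residues of the valuation-$0$ roots of $R$, you make the Hensel-lifting step (applied to the coprime factorization at each residue root) and the unit-level sign preservation explicit, which tightens the argument without changing its substance.
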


\begin{proof}
    Statement (1) is a result of the classical Newton polygon rule and the reader should be able to find a proof in most sources on valued fields or $p$-adic analysis. For example, Fernando Gouvêa's book \emph{p-adic Numbers} contains a proof in Chapter 7.4\footnote{Chapter 6.5 in the 1st ed.} \cite{Go}.

    For statement (2), let us begin by introducing some notation. For any $\gamma \in v(K^\times)$, we will let $t^\gamma$ denote a fixed element of $K$ such that $v(t^\gamma) = \gamma$ and $t^\gamma > 0$.

    Fix $j \in \{1,\dots,r\}$. Then the polynomial $P(t^{\lambda_j} x)$ has roots $\alpha_i t^{-\lambda_j}$ and the slopes of $\newt(P(t^{\lambda_j} x))$ are $\{\lambda_j - \lambda_i : i = 1, \dots, r \}$.
    In particular, the edge corresponding to $\sigma_j$ now has slope $0$ after this transformation and hence $\gamma \coloneqq v(c_i)$ is constant for each monomial $c_ix^i$ of $P_j(t^{\lambda_j}x)$.

    Consider the polynomial $R(x) \coloneqq t^{-\gamma}P(t^{\lambda_j}x)$. The edge corresponding to $\sigma_j$ in $\newt(R)$ now has slope $0$ and lies along the $x$-axis. In particular, $R \in \mathcal{O}_K[x]$. We note that the number of real roots of $R$ with valuation $0$ is exactly the number of real roots of $P$ with valuation $\lambda_j$ and the signs of the roots match as well.

    The residue of $R$ mod $\mathfrak{m}_K$ has the same support as $P_j$, namely
    \[ \overline{R}(x) = \sum\{\overline{a_it^{i\lambda_j-\gamma}}x^i : (i, v(a_i)) \in \sigma_j\}. \]
    Now factor $\overline{R}$ in $k[\sqrt{-1}]$ as $cx^m\prod_{i = 1}^s(x - \beta_s)$. The roots $\beta_1,\dots,\beta_s$ are the residues of the complex roots of $R$ with valuation $0$. These include residues of real roots plus pairs of residues of complex-conjugate non-real roots. For example, we might have a pair of complex-conjugate roots in $K[\sqrt{-1}]$, whose residue in $k[\sqrt{-1}]$ is real. They will have the same residue since the valuation on $K[\sqrt{-1}]$ is Galois invariant.

    Therefore, the number of positive, real roots of $P$ with valuation $\lambda_j$ is bounded above by, and congruent mod $2$ to the number of positive, real roots of $\overline{R}$. By Descartes's rule, this is counted by the number of sign changes of $P_j$.
\end{proof}

\begin{remark} \label{rem:hyp-counterexample}
    Theorem~\ref{thm:classical} can fail if the ordering is incompatible with the valuation. For example, in $\Q$ with the $p$-adic valuation and the usual (Archimedean) order, the polynomial $-p + (p-1)x + x^2 = (x + p)(x - 1)$ does not have a positive root with valuation $1$.
\end{remark}

\begin{corollary}
    Over $K = \R[[t^\R]]$, the number of sign changes of $P_j$ is also congruent, mod $2$, to the number of complex roots of $P$ with valuation $\lambda_j$, whose leading term is real and positive.
\end{corollary}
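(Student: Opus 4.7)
The plan is to deduce this from Theorem~\ref{thm:classical}(2) by observing that, among the roots of $P$ in the algebraic closure $\C[[t^\R]]$ of $\R[[t^\R]]$, the non-real ones whose leading term is real and positive come in complex-conjugate pairs and therefore contribute $0$ modulo $2$.

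Concretely, I would first recall from Section~\ref{sec:valuedfields} that $\R[[t^\R]]$ is real-closed and Henselian, so its algebraic closure is $\C[[t^\R]]$ and the unique extension $v_\C$ of the valuation is Galois-invariant under complex conjugation. For any root $\alpha = \sum_\gamma c_\gamma t^\gamma \in \C[[t^\R]]$ of $P$ with the prescribed valuation $\gamma_0$, the complex conjugate $\bar\alpha = \sum_\gamma \overline{c_\gamma}\, t^\gamma$ is also a root with the same valuation and with leading coefficient $\overline{c_{\gamma_0}}$. Hence $\alpha$ has real positive leading term exactly when $c_{\gamma_0} \in \R_{>0}$, and in that case $\bar\alpha$ has the very same leading term as $\alpha$. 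Thus the non-real roots of $P$ with the prescribed valuation and real positive leading term are partitioned into pairs $\{\alpha, \bar\alpha\}$, and their total count is even.

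For real roots $\alpha \in \R[[t^\R]]$, the leading coefficient is automatically real, and the ordering on $\R[[t^\R]]$ recalled in Section~\ref{sec:valuedfields} declares $\alpha > 0$ iff that leading coefficient is positive. So among the real roots with the prescribed valuation, those with real positive leading term are exactly the positive ones. Combining the two cases, the number of complex roots of $P$ with the prescribed valuation and real positive leading term is congruent mod $2$ to the number of positive real roots with that valuation, which by Theorem~\ref{thm:classical}(2) is congruent mod $2$ to $\Delta(P_j)$.

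The only substantive input is the Galois-invariance of $v_\C$, which has already been cited from \cite{EP}; everything else is the bijection $\alpha \leftrightarrow \bar\alpha$ on non-real roots. I do not foresee a serious obstacle---the one place to be careful is confirming that the condition \emph{real positive leading term} is genuinely fixed by complex conjugation, which is immediate from $c_{\gamma_0} = \overline{c_{\gamma_0}}$.
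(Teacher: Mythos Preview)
Your proposal is correct and follows essentially the same route as the paper: the paper's proof also observes that the non-real roots come in complex-conjugate pairs, singles out the pairs whose leading term is real (but whose higher-order terms are not), and adds these pairs back into the count from Theorem~\ref{thm:classical}(2). Your write-up is slightly more explicit in verifying that the condition ``real positive leading term'' is preserved under conjugation, but the argument is the same.
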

The difference between this corollary and the general statement is that elements of $\R[[t^\R]]$ have terms and we can either ask that all of the terms be real or just the leading one.

\begin{proof}
    From the proof of Theorem~\ref{thm:classical}, the non-real roots came in pairs. One of those sets of pairs is the pairs of roots whose leading term is real but whose higher-order terms are non-real. When we add these pairs back into the count, we obtain this corollary.
\end{proof}

\section{The Baker-Lorscheid Multiplicity Formula}
In this section we prove the main theorem, Theorem~\ref{thm:main}. To do this, we first need to understand better what it means to be a root of a polynomial over $\TR$. For instance, over the tropical hyperfield, it is known that $a$ is a root of $\sum c_ix^i$ if and only if the minimum $\min\{ c_i + ia \}$ is a achieved at least twice.

At the end of the section, we will relate Theorem~\ref{thm:main} to Theorem~\ref{thm:classical}.

\begin{proposition}
    A sum $a_1 \boxplus \cdots \boxplus a_n$ in $\TR$ contains $\infty$ if and only if
    \begin{enumerate}
        \item The minimum of $|a_1|, \dots, |a_n|$ is achieved twice, and
        \item the minimum is achieved with opposite signs. That is, there exists $i$ and $j$ such that $a_i = -a_j$ and $|a_i| = |a_j| = \min \{|a_1|, \dots, |a_n|\}$.
    \end{enumerate}
\end{proposition}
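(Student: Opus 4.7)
The plan is to exploit the realization $\TR \cong \R[[t^\R]]/v_\R^{-1}((+1,0))$ given in Section~\ref{sec:hyperfields}. Under this identification, $a_1 \boxplus \cdots \boxplus a_n = \{v_\R(\alpha_1 + \cdots + \alpha_n) : v_\R(\alpha_i) = a_i\}$, so $\infty$ lies in the sum if and only if there exist Hahn series $\alpha_i \in \R[[t^\R]]$ with $v_\R(\alpha_i) = a_i$ and $\alpha_1 + \cdots + \alpha_n = 0$. Terms with $a_i = \infty$ correspond to $\alpha_i = 0$ and affect neither side of the claimed equivalence, so I may assume every $a_i$ is finite.

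For the forward direction, suppose such a cancelling choice of $\alpha_i$ exists. Let $m = \min_i |a_i|$ and examine the coefficient of $t^m$ in $\sum_i \alpha_i$. Only those indices with $|a_i| = m$ contribute, and each contribution is a nonzero real whose sign is $\sign(a_i)$. For this coefficient to vanish, the minimum cannot be achieved by a single index (a lone nonzero real does not vanish) and the contributing indices cannot all share a sign (a sum of same-sign nonzero reals is nonzero); these are exactly conditions (1) and (2).

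For the backward direction, I build the $\alpha_i$ explicitly. Using the hypothesis, pick indices $j_+, j_-$ with $|a_{j_\pm}| = m$ and $\sign(a_{j_\pm}) = \pm 1$. For every other $i$, take $\alpha_i = \sign(a_i)\,t^{|a_i|}$, which has the correct $v_\R$-value and contributes only at exponents $\geq m$. Let $S = \sum_{i \notin \{j_+,j_-\}} \alpha_i$ and let $\beta$ be the coefficient of $t^m$ in $-S$. Choose any real $c > \max(0,\beta)$ and set $\alpha_{j_+} = c\,t^m$ and $\alpha_{j_-} = -S - c\,t^m$. Then $v_\R(\alpha_{j_+}) = (+1, m) = a_{j_+}$ because $c > 0$; and $v_\R(\alpha_{j_-}) = (-1, m) = a_{j_-}$ because every term of $-S - c\,t^m$ has exponent $\geq m$ while the coefficient at $t^m$ is $\beta - c < 0$. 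By construction $\sum_i \alpha_i = 0$, so $\infty$ is in the hyperaddition.

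The only subtle point is the backward construction: the naive choice $\alpha_i = \sign(a_i)\,t^{|a_i|}$ for every $i$ need not sum to zero, so two indices must be reserved to absorb the discrepancy. The hypothesis that the minimum is achieved with opposite signs is exactly what guarantees that such a reserved pair exists, and the threshold $c > \beta$ ensures $\alpha_{j_-}$ has leading exponent $m$ rather than something larger.
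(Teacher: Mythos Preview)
Your proof is correct. The forward direction---reading off conditions (1) and (2) from the vanishing of the $t^m$-coefficient in a cancelling lift---is clean, and in the backward direction your device of reserving two indices and choosing $c > \max(0,\beta)$ correctly forces $\alpha_{j_-}$ to have leading exponent exactly $m$ with negative coefficient.

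Your route, however, is genuinely different from the paper's. You work \emph{extrinsically}, lifting to $\R[[t^\R]]$ and arguing with explicit Hahn series. The paper instead stays \emph{inside} the hyperfield formalism: for the forward direction it pushes the hypersum through the morphisms $|\cdot|\colon \TR \to \T$ and $\sign\colon \TR \to \Si$, reducing (1) to the familiar ``minimum achieved twice'' criterion in $\T$ and (2) to the fact that $0 \in s_1 \boxplus \cdots \boxplus s_m$ in $\Si$ forces a sign change; for the backward direction it simply observes that if $a_1 = -a_2$ realize the minimum then $a_1 \boxplus \cdots \boxplus a_n = a_1 \boxplus a_2 \ni \infty$ directly from the addition table~\eqref{eq:TRaddition}. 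The paper's argument is shorter and showcases the morphism machinery of Section~\ref{sec:morphisms}, which is thematically central to the paper. Your argument is more self-contained---it does not invoke the corresponding facts for $\T$ or $\Si$---and makes the Hahn-series model do all the work, at the cost of a small explicit construction.
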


\begin{proof}
    If $\infty \in a_1 \boxplus \cdots \boxplus a_n$ then $\infty \in |a_1| \boxplus \dots \boxplus |a_n|$. In analogy with valued fields, if the minimum of a sum over $\T$ occurs just once, then the whole sum evaluates to that minimum. For instance, suppose $|a_1|$ is the unique minimum. Then $|a_1| \boxplus |a_2| = \{|a_1|\}$ and $\{|a_1|\} \boxplus |a_3| = \{|a_1|\}$ and so on. Therefore, for $\infty$ to be in this sum, the minimum has to occur at least twice.

    Now let us permute the summands so that $\{ i : |a_i| = \min_j |a_j| \} = \{1,\dots,m\}$. Then by the same reasoning as above, $a_1 \boxplus \dots \boxplus a_n = a_1 \boxplus \dots \boxplus a_m$. Now applying the $\sign$ morphism, we get $0 \in \sign(a_1) \boxplus \dots \boxplus \sign(a_m)$ which is only possible if there is a pair of signs which is different.

    Finally, that the minimum of $|a_1|, \dots, |a_n|$ is $|a_1| = |a_2|$ and $a_1 = -a_2$. Then one checks that $a_1 \boxplus \dots \boxplus a_n = a_1 \boxplus a_2 \ni \infty$.
\end{proof}

\begin{corollary}
    If $p(x) = \sum_{i = 0}^n c_ix^i$ is a polynomial over $\TR$, then a nonzero $a$ is a root of $p$ if and only if
    \begin{enumerate}
        \item $\min\{ |c_ia^i| \}$ is achieved twice.
        \item $\min\{ |c_ia^i| \}$ is achieved with opposite signs.
    \end{enumerate}
\end{corollary}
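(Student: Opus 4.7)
The plan is to derive this as an immediate corollary of the preceding proposition, by setting $a_i = c_i a^i$ for $i = 0, \dots, n$ and translating the definition of ``root'' through that substitution.

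First, I would unwind the definition: $a$ is a root of $p$ means $0 \in p(a)$, which in $\TR$ means the additive identity $\infty$ lies in the hypersum $\bigboxplus_{i=0}^{n} c_i a^i$. Since $a$ is assumed nonzero (i.e.\ $a \neq \infty$), each power $a^i$ is a well-defined element of $\TR$, and so each product $c_i a^i$ is either an element of $\{\pm 1\} \times \R$ (when $c_i \neq \infty$) or equals $\infty$ (when $c_i = \infty$). Setting $a_i \coloneqq c_i a^i$, the condition $a$ is a root of $p$ becomes precisely $\infty \in a_0 \boxplus a_1 \boxplus \cdots \boxplus a_n$.

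Now I would apply the proposition to the tuple $(a_0, \dots, a_n)$. It says that $\infty$ belongs to this hypersum if and only if the minimum of the $|a_i|$ is achieved at least twice, with at least one pair of opposite signs realizing that minimum. Translating back, $|a_i| = |c_i a^i| = |c_i| \cdot |a|^i$ (using that multiplication in $\TR$ restricts to multiplication in $\T$ on absolute values), and $\sign(a_i) = \sign(c_i)\sign(a)^i$. Terms with $c_i = \infty$ contribute $|a_i| = \infty$ and are automatically excluded from realizing the minimum, so they cause no trouble.

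There is no real obstacle here: the entire content is packaged in the previous proposition, and the corollary is a purely notational restatement once one identifies $a_i$ with $c_i a^i$. The only thing to be a bit careful about is making explicit that the additive identity of $\TR$ is $\infty$ (not the symbol ``$0$'' used in the generic hyperfield definition), so that ``$a$ is a root of $p$'' in the sense of the earlier propdef indeed matches the hypothesis of the proposition.
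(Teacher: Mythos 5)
Your proof is correct and matches the paper's intent exactly: the paper presents this corollary without a written proof, treating it as immediate from the preceding proposition via the substitution $a_i = c_i a^i$, which is precisely what you spell out. The only thing you add is the (correct) clarification about $\infty$ being the additive identity, which is a reasonable thing to make explicit.
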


\setcounter{maintheorem}{0}
\begin{maintheorem} \label{thm:main}
    Let $p = \sum_{i = 0}^n c_i x^i \in \TR[x]$. Let $a$ be a positive root of $p$. Then $\mult_a(p) = \Delta_a(p)$.
\end{maintheorem}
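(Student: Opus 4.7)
The plan is to prove both inequalities $\mult_a^\TR(p) \le \Delta_a(p)$ and $\mult_a^\TR(p) \ge \Delta_a(p)$ by reducing to Baker and Lorscheid's sign-hyperfield theorem via the morphism $\cdot|_{t=0}\colon\OTR\to\Si$.

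I first reduce to a normal form. Substituting $x \mapsto ax$ produces $\tilde p(x) = p(ax) = \sum c_i a^i x^i$, which is well-defined because multiplication in $\TR$ is single-valued; it has the root $1=(+1,0)$, preserves sign patterns, and satisfies $\In_0(\tilde p) = \In_{-r}(p)$, so $\Delta_1(\tilde p) = \Delta_a(p)$. Factorizations $p \in (x-a)q$ correspond to factorizations $\tilde p \in (x-1)(a\cdot q(ax))$, preserving multiplicities (since $a$ is a unit). Dividing by the unit $(+1,\min_i|\tilde c_i|)$ then puts the polynomial in $\OTR[x]$ with $\min_i|c_i|=0$. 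After these reductions I may assume $a=1$, $p\in\OTR[x]$, $\min_i|c_i|=0$, and $\bar p := p|_{t=0} = \In_0(p) \in \Si[x]$; by Baker--Lorscheid, $\mult^\Si_{+1}(\bar p) = \Delta(\bar p) = \Delta_a(p)$.

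For the upper bound I induct on $\Delta_1(p)$. The case $\Delta_1(p)=0$ follows from the corollary characterizing roots in $\TR$: no sign change along the slope-$0$ edge means $1$ is not a root, so $\mult_1^\TR(p)=0$. For the inductive step, let $p \in (x-1)q$ in $\TR[x]$. The explicit formula for $\boxplus$ in $\TR$ shows $\OTR$ is closed under hyperaddition, so the recursion $d_{n-1}=c_n$ and $d_{i-1}\in c_i\boxplus d_i$ forces $q\in\OTR[x]$; moreover $\min_i|d_i|=0$ (else $\bar q = 0$ would force $\bar p = 0$), so $\bar q := q|_{t=0} = \In_0(q)$ and $\Delta_1(q) = \Delta(\bar q)$. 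Lemma~\ref{lem:morphism-factorization} applied to $\cdot|_{t=0}$ gives $\bar p \in (x-1)\bar q$ in $\Si[x]$, and Baker--Lorscheid yields $\Delta(\bar q) \le \Delta(\bar p)-1$, i.e.\ $\Delta_1(q) \le \Delta_1(p)-1$. The inductive hypothesis then gives $\mult_1^\TR(q) \le \Delta_1(q) \le \Delta_1(p)-1$, whence $\mult_1^\TR(p) \le \Delta_1(p)$.

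For the lower bound I again induct on $\Delta_1(p)$. Baker--Lorscheid furnishes $\bar q \in \Si[x]$ with $\bar p \in (x-1)\bar q$ and $\Delta(\bar q) = \Delta(\bar p)-1$; I then lift $\bar q$ to some $q \in \OTR[x]$ satisfying $p \in (x-1)q$ and $q|_{t=0}=\bar q$. The lift is constructed recursively by $d_{n-1}=c_n$ and $d_{i-1} \in c_i \boxplus d_i$, choosing $d_{i-1}$ to have the prescribed $\Si$-image $\bar d_{i-1}$: in the non-collision cases (where $c_i \boxplus d_i$ is a singleton) a short case analysis on signs and valuations shows the forced value automatically has the correct image, and in the collision case $|c_i|=|d_i|$ with $\sign(c_i)=-\sign(d_i)$ the hypersum $\{(\pm 1,t):t\ge|c_i|\}\cup\{\infty\}$ contains preimages of every element of $\Si$, giving the freedom to match any target. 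Once the lift exists, $\Delta_1(q) = \Delta(\bar q) = \Delta_1(p)-1$ and induction yields $\mult_1^\TR(q) \ge \Delta_1(p)-1$, so $\mult_1^\TR(p) \ge \Delta_1(p)$.

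The main obstacle is the lifting step, and the delicate point is ensuring the terminal equation $c_0 = -d_0$ holds in $\TR$, not merely after projection to $\Si$. When $|c_0|=0$ the $\Si$-constraint $\bar c_0 = -\bar d_0$ pins down $d_0$ exactly, but when $|c_0|>0$ the $\Si$-constraint only forces $|d_0|>0$, whereas $c_0 = -d_0$ requires both the exact valuation $|d_0|=|c_0|$ and the opposite sign. I expect to handle this either by running the recursion from both $d_{n-1}=c_n$ and $d_0=-c_0$ simultaneously and checking they meet consistently, or by using the freedom at the lowest collision index to force the correct $d_0$. The combinatorics of how sign changes along the slope-$0$ edge interact with these choices is the real content; everything else reduces to a mechanical comparison of the case tables for $\boxplus$ in $\TR$ and in $\Si$.
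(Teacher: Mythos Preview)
Your normalization and your upper-bound argument via the morphism $\cdot|_{t=0}\colon\OTR\to\Si$ match the paper's proof of Claim~(1) essentially verbatim.

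For the lower bound the paper takes a different route from yours. It does \emph{not} attempt to lift a Baker--Lorscheid factorization from $\Si$ back to $\OTR$; instead it writes down an explicit $q=\sum d_ix^i$ in $\OTR[x]$, splitting the index range at the location $r$ of the first sign change along the edge and defining $d_i$ by a running-minimum rule on $\{0,\dots,n_1\}$, by the constant $-c_{n_1}$ on $\{n_1,\dots,r\}$, and by a forward-looking minimum rule on $\{r,\dots,n-1\}$. This formula visibly satisfies both boundary equations $d_0=-c_0$ and $d_{n-1}=c_n$, after which a short case check verifies $c_i\in d_{i-1}\boxplus(-d_i)$ for each interior index. (Its image under $\cdot|_{t=0}$ is exactly Baker--Lorscheid's $\bar q$, so the two approaches are cousins.)

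Your lifting strategy, as it stands, has a genuine gap at precisely the point you flag. The two fixes you gesture at both require real content you have not supplied. There need not be any collision index strictly below $n_1$, so ``use the freedom at the lowest collision'' has to mean the collision at $i=n_1$ itself, and that collision is only guaranteed because Baker--Lorscheid's particular $\bar q$ has $\bar d_{n_1}=-\bar c_{n_1}$, a fact you would need to invoke explicitly. Having secured that freedom at $n_1$, you must then produce $d_0,\dots,d_{n_1-1}$ with $d_0=-c_0$, $|d_{n_1-1}|\ge 0$, and $c_i\in d_{i-1}\boxplus(-d_i)$ throughout; carrying this out amounts to the same case analysis the paper does for its explicit running-minimum formula on $\{0,\dots,n_1\}$. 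So the obstacle you identify is real, and your proposal does not yet clear it.
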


\begin{proof}
    The goal, by definition, is to maximize $\mult_a(q)$ over all factorizations $p \in (x - a)q$.
    That is, we claim:
    \begin{enumerate}
        \item If $p \in (x - a)q$ is any factorization, then $\Delta_a(q) < \Delta_a(p)$. \label{claim:1}
        \item There exists a factorization $p \in (x - a)q$ such that $\Delta_a(q) = \Delta_a(p) - 1$. \label{claim:2}
    \end{enumerate}
    The Theorem follows from these two claims by induction.

    The proof of Claim \eqref{claim:1} is identical to the corresponding proof for polynomials over $\Si$ \cite[proof of Theorem~3.1]{BL}. To make this connection clear, we apply the same transformations as in the proof of our Theorem~\ref{thm:classical}. That is: first consider $p(ax)$ and then let $r(x) = \gamma^{-1} p(ax)$ where $\gamma \coloneqq |c_i|$ is constant for each monomial $c_ix^i$ of $\In_{\sigma}(p)(ax)$.

    Then, as before, $\newt(r)$ lies above the $x$-axis with the edge corresponding to $\sigma$ now having a slope of $0$ and lying on the $x$-axis. So when we take the ``residue,'' $r|_{t = 0} \in \Si[x]$, we are recording just the signs of the monomials in $\In_{\sigma}(p)$.

    To summarize, the number of sign changes does not change when we do our transformations
    \[\TR[x] \xrightarrow{p \mapsto \gamma^{-1}p(ax)} \OTR[x] \xrightarrow{\cdot|_{t = 0}} \Si[x]. \]

    Now consider a factorization $p \in (x - a)q$ in $\TR[x]$ and the corresponding factorization $r|_{t = 0} \in (x - 1)s|_{t = 0}$ in $\Si[x]$, where $s(x) = \gamma^{-1}q(ax)$. Passing the factorizations through these morphisms is justified by Lemma~\ref{lem:morphism-factorization}. Therefore, applying Baker and Lorscheid's proof to this factorization, we have \[ \Delta_\sigma(p) = \Delta(r|_{t = 0}) > \Delta(s|_{t = 0}) = \Delta_\sigma(q). \]

    For claim \eqref{claim:2}, it will be convenient to replace $p(x) \in \TR[x]$ by its transformation $r(x)  \in \OTR[x]$. That is, we may assume that $a = (+1,0)$ and that $\sigma$ has slope $0$ and lies on the $x$-axis and that $\newt(p)$ lies on or above the $x$-axis.

    We note that our transformation $\TR[x] \to \OTR[x]$ is invertible so there is no issue pulling a factorization back along it. On the other hand, the transformation $\OTR[x] \to \Si[x]$ is not invertible so rather than try to lift a factorization along this morphism, we will work inside $\OTR[x]$ directly.

    Let us fix our notation. Write
    \[ p = c_0 + c_1x + \dots + c_nx^n \]
    and suppose the endpoints of our edge $\sigma$ are $(n_1, 0)$ and $(n_2, 0)$. Suppose that the first sign change in $\sigma$ occurs at index $r$. That is, $c_i = c_{n_1}$ or $|c_i| > |c_{n_1}|$ for $i = n_1,\dots,r-1$ and $c_r = -c_{n_1}$.

    We will define a polynomial $q = \sum d_ix^i$ as follows.
    \begin{itemize}
        \item For $i \le n_1$, define $d_i = -c_k$ where $k$ is the smallest index such that $|c_k| = \min_{j \le i} |c_j|$.
        \item For $n_1 \le i \le r$, let $d_i = -c_{n_1}$.
        \item For $r \le i \le n$, let $d_i = c_k$ where $k$ is the smallest index $> i$ such that $|c_k| = \min\{|c_j| : j > i \}$.
    \end{itemize}
    By construction, $\Delta_a(q) = \Delta_a(p) - 1$ because we have all the sign changes of $p$ except for the one at $i = r$. All that is left to do is to show that $p \in (x - a)q$.

    For $i = 0$ and $i = n$, we have $d_0 = -c_0$ and $d_{n-1} = c_n$ which confirms the identity $p \in (x - a)q$ on the ``boundary.'' For all other $i$, the relation $p \in (x - a)q$ means that
    \begin{equation*}
        c_i \in d_{i - 1} \boxplus (-d_i).
    \end{equation*}

    For $i \le n_1$, there are two cases. First, if $|c_i| < \min_{j < i} |c_j| = |d_{i - 1}|$ then $d_i = -c_i$, in which case $d_{i - 1} \boxplus (-d_i) = \{-d_i\} = \{c_i\}$. Second, if $|c_i| \ge \min_{j < i} |c_j|$ then $d_i = d_{i - 1}$ and $d_{i - 1} \boxplus (-d_i) \ni c_i$.

    Next, for $n_1 < i \le r$, $d_i$ is constant and hence $d_{i-1} \boxplus (-d_i) \ni c_i$ for any $i$ in that range.

    Finally, for $r < i < n$, notice that $|c_i| \ge |d_{i - 1}|$. If \[ |d_i| = \min\{|c_k| : k > i\} > |d_{i-1}| = \min\{ |c_k| : k > i - 1\},\] then we must have $d_{i - 1} = c_i$ and hence $c_i \in d_{i-1} \boxplus (-d_i)$. Otherwise, if $d_{i - 1} = d_i$ then $c_i \in d_{i-1} \boxplus (-d_i)$ since $|c_i| \ge |d_{i-1}|$. The final case, which is that $d_{i - 1} = -d_i$ can only happen if $d_{i-1} = c_i$ and again $c_i \in d_{i-1} \boxplus (-d_i)$.
\end{proof}

\begin{remark}
    For a polynomial in $\Si[x] \subseteq \TR[x]$, the polynomial $q$ we define is the same as the corresponding $q$ defined in Baker and Lorscheid's proof \cite[Theorem~3.1]{BL}. Also notice that $\newt(q)$ is the same as $\newt(p)$ except for shortening $\sigma$ by $1$.
\end{remark}

The following Theorem is an analogue of Remark 1.14 in Baker and Lorscheid's paper~\cite{BL} concerning lifting polynomials in $\TR[x]$ to polynomials in $\R[[t^\R]][x]$ with the maximum number of real roots as allowed by the Theorem~\ref{thm:main}.

\begin{theorem} \label{thm:lifting}
    Let $p \in \TR[x]$, then there exists a polynomial $P \in \R[[t^\R]][x]$ such that $v_\R(P) = p$ and for each edge $\sigma$ of $\newt(P)$ with slope $-|a|$ ($a \in \TR, a > 0$), $P$ has exactly $\mult_{a}(p(x))$ (resp. $\mult_{-a}(p(-x))$) roots of valuation $|a|$ whose leading term is real and positive (resp. negative).
\end{theorem}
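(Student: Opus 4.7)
The plan is to construct $P$ explicitly as a monomial-lift, parameterized by positive real numbers. Write $p = \sum c_i x^i$ with $c_i = (s_i, v_i) \in \TR$ (set $c_i = \infty$ if a coefficient is missing), and put
\[ P(x) = \sum_{c_i \ne \infty} s_i\, a_i\, t^{v_i}\, x^i \in \R[[t^\R]][x], \]
where $a_i > 0$ are positive reals still to be selected. Every such $P$ already satisfies $v_\R(P) = p$ and $\newt(P) = \newt(p)$, so only the real-root counts will depend on the choice of $a_i$.

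Fix an edge $\sigma$ of $\newt(p)$ of slope $-\lambda$ and set $a = (+1, \lambda) \in \TR$. As in the proof of Theorem~\ref{thm:classical}, substitute $x = t^\lambda y$ and divide by the appropriate power of $t$ to obtain $R(y) \in \ORR[y]$ whose edge corresponding to $\sigma$ lies along the $y$-axis. Its reduction modulo $\mathfrak{m}_{\R[[t^\R]]}$ is
\[ \overline{R}(y) = \sum_{(i,v_i) \in \sigma} s_i\, a_i\, y^i \in \R[y], \]
which has support and signs matching $\In_\sigma(p)$. Since $\R[[t^\R]]$ is Henselian with residue field $\R$ (real-closed), every simple root of $\overline{R}$ in $\C$ lifts uniquely to a root of $R$ in $\C[[t^\R]]$ of valuation $0$ with the same leading coefficient; hence a simple positive (resp. negative) real root of $\overline{R}$ corresponds to a root of $P$ of valuation $\lambda$ whose leading term is real and positive (resp. negative), while simple non-real conjugate pairs of roots of $\overline{R}$ yield roots of $P$ in $\C[[t^\R]]$ with non-real leading terms.

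It therefore suffices to choose the $a_i > 0$ so that, for every edge $\sigma$, the polynomial $\overline{R}(y)$ has only simple roots and realizes both Descartes bounds sharply---exactly $\Delta(\In_\sigma p)$ positive and $\Delta(\In_\sigma p(-y))$ negative real roots, with the remainder coming as non-real conjugate pairs. Since distinct edges of $\newt(p)$ share at most a common endpoint, we may first pre-assign arbitrary positive values to the $a_i$ at vertices of $\newt(p)$ and then, edge by edge, invoke the following \textbf{sharpness lemma}: given any sign pattern on a finite index set $S$ and arbitrary prescribed positive values at $\min S$ and $\max S$, there exist positive magnitudes for the intermediate coefficients making the resulting real polynomial have all simple roots and attain both Descartes bounds simultaneously.

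The main obstacle is this sharpness lemma, a strengthening of the classical sharpness of Descartes' rule of signs. A natural approach is induction on $|S|$: remove an interior index $i_0$, apply the inductive hypothesis to obtain a polynomial realizing the smaller Descartes bounds with all simple roots, and then re-insert the monomial $s_{i_0} a_{i_0} y^{i_0}$ with $a_{i_0} > 0$ sufficiently small. Continuous dependence of roots on coefficients guarantees that the previously constructed simple real roots persist, and a careful homotopy argument (continuously increasing $a_{i_0}$ from $0^+$ and tracking real/non-real roots) allows one to realize exactly the additional real roots forced by the new sign changes. In spirit this mirrors the explicit construction of the factorization $p \in (x - a)q$ in the proof of Theorem~\ref{thm:main}, Claim~(2), but carried out in the real field rather than over $\TR$.
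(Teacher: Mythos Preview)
Your monomial-lift approach differs from the paper's. The paper builds $P$ as a \emph{product}: for each edge $\sigma$ it invokes the standard converse of Descartes's rule (citing Grabiner) to pick a real polynomial $R_\sigma$ with the prescribed sign pattern and exactly the target numbers of positive and negative real roots, then sets $\tilde P(x)=t^{\delta}\prod_\sigma R_\sigma(t^{-|a|}x)$ and afterward adjusts the off-boundary monomials to match $p$. The roots of $\tilde P$ are visibly the rescaled roots of the $R_\sigma$, so no Hensel lifting or simplicity hypothesis is needed; and because the $R_\sigma$ are chosen independently, the shared-vertex constraint that drives your strengthened sharpness lemma never appears.

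Your route is sound in outline, but the inductive sketch of the sharpness lemma is a genuine gap. The step ``continuously increase $a_{i_0}$ and track roots by homotopy to realize exactly the additional real roots'' is unjustified: nothing guarantees that the positive-root count ever climbs to its Descartes maximum along that one-parameter family, and raising $a_{i_0}$ may destroy the simple real roots supplied by the inductive hypothesis before any new ones appear. A clean fix is to observe that the endpoint constraint is illusory. Once \emph{some} polynomial $f(y)=\sum_{i\in S} b_i y^i$ with the given sign pattern achieves both Descartes bounds with only simple nonzero roots (the classical result the paper cites, followed if necessary by a generic perturbation), the rescaling $c\,f(\lambda y)$ with $c,\lambda>0$ preserves signs, root counts, and simplicity; solving $c\lambda^{m}=a_{m}/|b_{m}|$ and $c\lambda^{n}=a_{n}/|b_{n}|$ (where $m=\min S$, $n=\max S$) matches any prescribed positive endpoint magnitudes. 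This collapses your lemma to the standard one and completes your argument.
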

\begin{proof}
    For each $\sigma$, let $r_\sigma(x)$ be the polynomial $\gamma^{-1}p(ax)$ that we had in the proof of the Theorem~\ref{thm:main}. Now we use the converse of Descartes's rule (for example, \cite[Theorem~1]{G}) to choose a lift $R_\sigma(x) \in \R[x]$ of $r_\sigma|_{t = 0} \in \Si[x]$ which has exactly $\mult_a(p(x))$ (resp. $\mult_{-a}(p(-x))$) positive (resp. negative) real roots.

    Now let \[ \tilde{P}(x) = t^{\delta}\prod_\sigma R_\sigma(t^{-|a|} x) \] where $\delta$ is chosen such that $\newt(\tilde{P}) = \newt(p)$. From $\tilde{P}$, we can take any choice of monomials not on the boundary of $\newt(p)$ to a polynomial $P$ such that $v_\R(P) = p$.

    Note that the leading terms of the roots of $\In_{\sigma}(P)$ are exactly the roots of $R_\sigma \in \R[x]$. By construction, $P$ has correct number of roots of valuation $|a|$ whose leading term is real and positive/negative.
\end{proof}

It would be interesting to know if we could pick $P$ such that the roots themselves are real, rather than just their leading terms. Also note that this theorem only ensures the maximal number of roots with one specific valuation, rather than all possible valuations at once.

\emergencystretch=1em % fix hbox issues in bibliography
\printbibliography

% \bigskip

% \noindent \emph{Email address:} \texttt{\href{mailto:tgunn@gatech.edu}{tgunn@gatech.edu}}

% \bigskip

% \noindent \textsc{\footnotesize School of Mathematics, Georgia Institute of Technology, Atlanta, USA}

\end{document}